\let\oldmarginpar\marginpar
\renewcommand\marginpar[1]
\newcommand{\la}{\langle}
\newcommand{\ra}{\rangle}
\newtheorem{theorem}{\bf Theorem}[section]
\newtheorem{lemma}[theorem]{\bf Lemma}
\newtheorem{prop}[theorem]{\bf Proposition}
\newtheorem{corollary}[theorem]{\bf Corollary}
\newcommand{\CC}{{\Bbb C}}
\newcommand{\NN}{{\Bbb N}}
\newcommand{\PP}{{\Bbb P}}
\newcommand{\QQ}{{\Bbb Q}}
\newcommand{\RR}{{\Bbb R}}
\newcommand{\ZZ}{{\Bbb Z}}
\newcommand{\ggreat}{>\kern-.7ex>}
\newcommand{\ssmall}{<\kern-.7ex<}
\newcommand{\qu}{/\kern-.7ex/}
\newcommand{\exh}{\to\kern-1.8ex\to}
\newcommand{\hH}{{\EuScript{H}}}
\newcommand{\GL}{\operatorname{GL}}
\newcommand{\Diff}{\operatorname{Diff}}
\newcommand{\Id}{\operatorname{Id}}
\newcommand{\Ind}{\operatorname{Ind}}
\newcommand{\Ker}{\operatorname{Ker}}
\newcommand{\rk}{\operatorname{rk}}
\newcommand{\SO}{\operatorname{SO}}
\newcommand{\SU}{\operatorname{SU}}
\newcommand{\ov}{\overline}
\newcommand{\un}{\underline}
\newcommand{\uC}{\underline{\CC}}
\newcommand{\ord}{\operatorname{ord}}
\newcommand{\wt}{\widetilde}
\newcommand{\imag}{{\mathbf i}}
\title[Non Jordan groups of diffeomorphisms and compact Lie groups]
{Non Jordan groups of diffeomorphisms and actions of compact Lie groups on manifolds}
\author{Ignasi Mundet i Riera}
\address{Departament d'\`Algebra i Geometria\\
Facultat de Matem\`atiques\\
Universitat de Barcelona\\
Gran Via de les Corts Catalanes 585\\
08007 Barcelona \\
Spain}
\email{ignasi.mundet@ub.edu}
\date{December 18, 2014}
\begin{document}

\maketitle

\begin{abstract}
A recent preprint of Csik\'os, Pyber and Szab\'o \cite{CPS}
proves that the diffeomorphism group of $T^2\times S^2$ is not
Jordan. The purpose of this paper is to
generalize the arguments of Csik\'os, Pyber and Szab\'o in
order to obtain many other examples of compact manifolds whose
diffeomorphism group fails to be Jordan. In particular we prove
that for any $\epsilon>0$ there exist manifolds admitting
effective actions of arbitrarily large $p$-groups $\Gamma$ all
of whose abelian subgroups have at most $|\Gamma|^{\epsilon}$
elements. Finally, we also recover some results on nonexistence
of effective actions of compact connected semisimple Lie group
on manifolds.
\end{abstract}

\section{Introduction}

A group $G$ is said to be Jordan if there is some constant $C$ such
that any finite subgroup $\Gamma$ of $G$ contains an abelian subgroup whose index
in $\Gamma$ is at most $C$, see \cite{Po0}.
In their preprint \cite{CPS},
Csik\'os, Pyber and Szab\'o prove that the diffeomorphism
group of the product of the torus $T^2=S^1\times S^1$
with the two dimensional sphere $S^2$ is not Jordan.
This is the first known
example of a compact manifold whose diffeomorphism group is not Jordan.
Previously, Popov \cite{Po1} had given an example of a connected open
$4$-manifold with non Jordan diffeomorphism group.
In contrast, there are many examples of manifolds whose diffeomorphism group is
known to be Jordan: these include all compact manifolds of dimension at most $3$,
all compact manifolds with nonzero Euler characteristic, homology spheres, the connected
sum of a torus and an arbitrary compact connected manifold,
and open contractible manifolds, see \cite{M1,M2,M3,M4,Z}.

Denote by $M_d$
the projectivisation of the complex vector bundle $L_d\oplus\un{\CC}\to T^2$,
were $L_d\to T^2$ is a degree $d$ line bundle and $\un{\CC}\to T^2$ is the trivial line bundle.
Csik\'os, Pyber and Szab\'o base their proof in two facts. First, $M_d$ is diffeomorphic
to $T^2\times S^2$ for any even $d$. Second, for positive $d$ there is a finite group $\Gamma_d$
of order $d^3$ acting effectively on $M_d$ such that any abelian subgroup $A$ of
$\Gamma_d$ has at most $d^2$ elements. The group $\Gamma_d$ is a Heisenberg group,
and its action on $M_d$ is induced by an effective linear action on $L_d$.
Picking algebraic structures on
$T^2$ and the vector bundle $L_d\oplus\un{\CC}$ the action of $\Gamma_d$ can
be taken to be algebraic. This is the key ingredient in a paper of
Zarhin \cite{Za} that gives the first example of algebraic manifold whose group of
birational transformations is not Jordan.

A slightly different way to present the arguments in \cite{CPS}
is the following. For any (non necessarily even) integer $d$
the complex vector bundle $V:=L_d\oplus L_d^{-1}\to T^2$ has
degree $0$. By the classification of complex vector bundles
over compact connected surfaces this implies that $V$ can be
trivialized. Furthermore one can pick a $\Gamma_d$-invariant
trivialization of $\det V$, because the action of $\Gamma_d$
naturally induced on $\det V$ factors through a free action of
$\ZZ_d^2$. Using a $\Gamma_d$-invariant Hermitian metric on $V$
with respect to which the section of $\det V$ defining the
trivialization has constant norm equal to $1$, the bundle $E$
of unitary frames compatible with the trivialization of $\det
V$ turns out to be a (trivial) $\SU(2)$ principal bundle, and
the action of $\Gamma_d$ on $L_d$ gives an effective action on
$E$. 

Now we can identify $T^2\times S^2$ with
$E\times_{\SU(2)}S^2$, where $\SU(2)$ acts on $S^2$ via the
quotient map $\SU(2)\to\SO(3,\RR)$ and the identification of
$\SO(3,\RR)$ with the orientation preserving isometries of
$S^2$. If $d$ is odd, the action of $\Gamma_d$ on $E$
induces an effective action on $E\times_{\SU(2)}S^2=T^2\times
S^2$. The parity restriction on $d$ is a consequence of the
fact that there is a subgroup $\{\pm 1\}\subset\SU(2)$ acting
trivially on $S^2$. Hence this point of view gives a slightly
less general construction than \cite{CPS}, since the latter
 allows to construct effective actions of $\Gamma_d$ on
$T^2\times S^2$ for even $d$; this discrepancy comes from the
fact that $E\times_{\SU(2)}S^2$ can be identified in a
$\Gamma_d$-equivariant way with
$\PP(L_d^2\oplus\underline{\CC})$. On the other hand, this
point of view immediately suggests that $S^2$ can be replaced
by any manifold admitting an effective action of $\SU(2)$ or
$\SO(3,\RR)$. For example, since any sphere of dimension at 
least $2$ supports such actions, we deduce that 
the diffeomorphism group of the product
$T^a\times S^b$ of a torus and a sphere of dimensions $a\geq 2$
and $b\geq 2$ is not Jordan.

Before stating our main theorem we introduce some terminology
and conventions. We define the kernel of the action of a group
$G$ on a space $X$ to be the subgroup of $G$ consisting of
those elements that act trivially on $X$. We say that the
action of $G$ on $X$ is almost effective if its kernel is
finite. All manifolds and actions of groups on manifolds which
we consider are implicitly assumed to be smooth. In this paper
by a natural number we mean a strictly positive integer. The
set of natural numbers is denoted as usually by $\NN$. For any
$k\in\NN$ we denote by $T^k$ the $k$-dimensional torus
$(S^1)^k$.

For any pair $(\tau,r)\in\NN^2$ we denote by $\SU(\tau)^r$ the
direct product of $r$ copies of the special unitary group
$\SU(\tau)$.

We are now ready to state the main result of this paper.

\begin{theorem}
\label{thm:main} There exist functions $\tau,M:\NN\to\NN$ such
that for any $n,r\in \NN$ the following property is satisfied. Suppose that
a manifold $X$ supports an almost effective action of
$\SU(\tau(n))^r$ with kernel $H$. For any prime $p$ not
dividing $|H|$ and satisfying $p>M(n)$ and $p\equiv 1\mod n+1$,
there exists a finite $p$-group $\Gamma$ acting effectively on
$T^{2nr}\times X$ such that $|\Gamma|=p^{2n+r}$ and:
\begin{enumerate}
\item if $r=1$ then any abelian subgroup of $\Gamma$ has at
    most $p^{n+1}$ elements;
\item if $r>1$ then any abelian subgroup of $\Gamma$ has at
    most $p^{2+r+4n/r}$ elements.
\end{enumerate}
\end{theorem}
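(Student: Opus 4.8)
The plan is to follow the bundle-theoretic reformulation of the introduction, replacing the single line bundle over $T^2$ by a trivial principal $\SU(\tau(n))^r$-bundle over $T^{2nr}$ carrying a twisted action of a finite $p$-group. First I would define $\Gamma$ purely algebraically as a central extension $1 \to \ZZ_p^r \to \Gamma \to \ZZ_p^{2n} \to 1$ whose commutator pairing $\ZZ_p^{2n}\times\ZZ_p^{2n}\to\ZZ_p^r$ is given by an $r$-tuple of alternating forms $(\omega_1,\dots,\omega_r)$ on $\FF_p^{2n}$; this has order $p^{2n+r}$ by construction. The quotient $A:=\ZZ_p^{2n}$ is destined to act on $T^{2nr}$ by translations through $p$-torsion points, while each factor $\ZZ_p$ of the centre $Z:=\ZZ_p^r$ will act on the corresponding $\SU(\tau(n))$-fibre through a fixed element of order $p$ in a maximal torus.

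Next I would realise this extension geometrically. On the trivial bundle $E=T^{2nr}\times\SU(\tau(n))^r$ I lift each translation by $v\in A$ to a bundle automorphism $(x,g)\mapsto(x+v,\phi_v(x)g)$, where the automorphy factors $\phi_v$ take values in a maximal torus and depend affinely on the base point; the base-point dependence forces two such lifts to commute only up to a constant central gauge transformation, and a direct computation identifies this commutator with $\exp(2\pi\imag\,\omega_j(v,w)/p)$ in the $j$-th factor. Choosing the winding data so that these forms reduce to the prescribed $\omega_j$ makes $v\mapsto(\text{lift})$ an embedding of $\Gamma$ into $\Aut(E)$; here $\tau(n)$ is taken large enough, and the congruence $p\equiv 1\bmod n+1$ is used, precisely so that the required order-$p$ torus elements and a normalising cyclic symmetry of order $n+1$ exist inside $\SU(\tau(n))$ and the lifts have finite order. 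Since $E$ is trivial, the associated bundle is canonically $E\times_{\SU(\tau(n))^r}X=T^{2nr}\times X$, and $\Gamma$ acts on it by $(x,\xi)\mapsto(x+v,\phi_v(x)\cdot\xi)$. Effectiveness is then immediate: an element acting trivially must project to $0\in A$, hence lie in $Z$ and act on $X$ through an element of $\SU(\tau(n))^r$ of $p$-power order lying in the kernel $H$; as $p\nmid|H|$ this element is trivial.

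The real content is the bound on abelian subgroups, and this is where I expect the main difficulty to lie. A subgroup of $\Gamma$ is abelian exactly when its image $W\subseteq\FF_p^{2n}$ is isotropic for every $\omega_j$ simultaneously, and the largest abelian subgroup with image $W$ is the full preimage, of order $p^{\dim W+r}$; so everything reduces to choosing the forms with small common isotropic subspaces. For $r=1$ one takes $\omega_1$ nondegenerate, whose maximal isotropic (Lagrangian) subspaces have dimension $n$, giving the bound $p^{n+1}$ of case (1). For $r>1$ I would verify, by a dimension count on the incidence variety $\{(W,\omega):\dim W=k,\ \omega_j|_W=0\}\subseteq\mathrm{Gr}(k,2n)\times(\wedge^2(\FF_p^{2n})^*)^r$, that a generic $r$-tuple admits no common isotropic $k$-subspace once $k(2n-k)<r\binom{k}{2}$, i.e. once $k>(4n+r)/(r+2)$; since $(4n+r)/(r+2)<2+4n/r$, this yields the bound $p^{2+r+4n/r}$ of case (2). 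The hypotheses $p>M(n)$ and $p\equiv 1\bmod n+1$ enter here to guarantee that such forms, with the integrality and symmetry needed for the geometric realisation above, actually exist over $\FF_p$ — for instance through an explicit model built from a primitive $(n+1)$-th root of unity in $\FF_p$, which simultaneously supplies the cyclic symmetry used to embed $Z$ into $\SU(\tau(n))^r$. Matching this simultaneous-isotropy estimate to the stated exponents, while keeping the forms realisable, is the crux of the argument.
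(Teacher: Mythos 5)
There is a genuine gap, and it sits exactly where the paper's main technical work lies: the geometric realization of the Heisenberg group on a \emph{trivial} principal $\SU(\tau(n))$-bundle over the torus. You propose to lift the translation by a $p$-torsion point $v$ to $(x,g)\mapsto(x+v,\phi_v(x)g)$ with $\phi_v$ valued in a maximal torus and ``depending affinely on the base point.'' If the linear part of the exponent of $\phi_v$ is integral (so that $\phi_v$ descends to $T^{2n}$), then the $p$-th power of this lift is the gauge transformation $(x,g)\mapsto(x,\exp(2\pi\imag\, p\ell_v(x)H)g)$ up to a constant, which is a \emph{nonconstant} map of the base unless $\ell_v=0$; hence the generated group is infinite and you never obtain a finite $p$-group. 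If instead you divide the linear part by $p$ to fix the order, $\phi_v$ no longer descends to the torus. This is not a technicality: the natural Heisenberg action lives on the line bundle $L_{n,p}$ with $c_1(L_{n,p})=p\Omega\neq 0$, and the obstruction to transplanting it onto a trivial bundle is precisely a Chern class. The paper's entire Section 2 (Theorem \ref{thm:vector-bundles}) exists to kill this obstruction: one takes a sum of powers $L_{n,p}^{a_jM}$ together with correction bundles $G_j(\delta_j)$ pulled back through the degree-$p^{2n}$ covering $w$, uses the congruence $p\equiv 1\bmod (n+1)$ to find $(n+1)$-th roots of unity in $(\ZZ_{p^n})^*$ whose symmetric functions vanish mod $p^n$ so that the total Chern class of the sum is trivial, stabilizes via $K$-theory to get an honest trivialization, and separately proves (Lemma \ref{lemma:accio-fibrat-trivial}) that the determinant admits an equivariant trivialization. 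Your proposal assumes all of this as if it were a routine computation, and it also misattributes the hypotheses: $p\equiv 1\bmod(n+1)$ is not about order-$p$ elements or a cyclic symmetry inside $\SU(\tau(n))$, and $p>M(n)$ is needed so that the exponents $a_jM$ are prime to $p$ (effectiveness) and for the divisibility conditions in Lemma \ref{lemma:spanning-classes-G}.

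The remaining architecture of your argument is sound and close to the paper's: defining $\Gamma$ as a central extension of $\ZZ_p^{2n}$ by $\ZZ_p^r$ with commutator pairing an $r$-tuple of alternating forms, reducing the abelian-subgroup bound to the nonexistence of common isotropic subspaces, using a nondegenerate form for $r=1$, and for $r>1$ invoking a genericity statement for $r$-tuples of forms. The paper uses Olshanskii's lemma (no common $k$-dimensional isotropic subspace once $4n<r(k-1)$) and realizes $\Gamma$ concretely as a twisted diagonal inside $(\Gamma_{n,p})^r$ via symplectic matrices $A_j$; your dimension count on the incidence variety gives a comparable threshold but would still need a point-counting argument to produce such forms over the finite field $\FF_p$, and you would need the forms to be nondegenerate to transport the already-constructed action of $\Gamma_{n,p}$. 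In short: parts (1) and (2) of the combinatorial reduction are essentially right, but the construction of the effective finite action on the trivial bundle --- the heart of the theorem --- is missing.
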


The manifold $X$ need not be compact.

By Dirichlet theorem (see e.g. \cite[\S8.4]{NZM}), for any
$n,h\in\NN$ there are infinitely many primes $p$ that do not
divide $h$ and satisfy $p>M(n)$ and $p\equiv 1\mod n+1$. Hence,
once we fix $X$ and an almost effective action of
$\SU(\tau(n))^r$ on $X$ the previous theorem applies to
infinitely many primes.

Our motivation to state the result referring to $p$-groups
comes from the main theorem in \cite{MT}, according to which to
test whether the diffeomorphism group of a manifold is Jordan
it suffices to consider finite subgroups of $G$ whose cardinal
is divisible by at most two different primes (actually the
result in \cite{MT} applies more generally to any group $G$
admitting a constant $R$ such that any elementary $p$-group
contained in $G$ has rank at most $R$, for any prime $p$;
diffeomorphism groups have always this property, by a theorem
of Mann and Su \cite{MS}). A priori the diffeomorphism group of
a manifold might fail to be Jordan but still satisfy Jordan's
property restricted to $p$-groups, and Theorem \ref{thm:main}
makes it clear that if $X$ satisfies the hypothesis of the
theorem then the diffeomorphism group of $T^{2nr}\times X$ does
not even have this property.

The $p$-groups obtained in the proof of Theorem \ref{thm:main}
are all $2$-step nilpotent. It seems an interesting question to
explore whether there are compact smooth manifolds admitting
actions of $k$-step nilpotent $p$-groups for some $k\geq 3$ and
arbitrarily large primes $p$.

Given a nontrivial finite group $\Gamma$ define
$$\lambda(\Gamma):=\max\left\{\frac{\log|A|}{\log|\Gamma|}\mid
A\text{ abelian subgroup of }\Gamma\right\},$$ and for any group
$G$ containing arbitrarily big finite subgroups consider the
quantity
$$\Lambda(G):=\inf\{\lambda\mid \exists \{\Gamma_i\}_{i\in\NN},\text{ each $\Gamma_i$ is a finite subgroup of $G$, }|\Gamma_i|\to\infty,\,\lambda(\Gamma_i)\to \lambda\};$$
if the size of the finite subgroups of $G$ is uniformly bounded, then define $\Lambda(G)=1$.
In particular, if $\Lambda(G)<\epsilon$ for some $\epsilon>0$, then $G$ contains arbitrarily large
finite subgroups $\Gamma$ all of whose abelian subgroups have size at most $|\Gamma|^{\epsilon}$.

Obviously, $\Lambda(G)\in [0,1]$ for any $G$. If $\Lambda(G)<1$
then $G$ is not Jordan, and the difference $1-\Lambda(G)$ gives
some measure of how far $G$ is from being Jordan. The main
result in \cite{CPS} implies that $\Lambda(\Diff(T^2\times
S^2))\leq 2/3$, while our theorem implies that if $X$ supports
an almost effective action of $\SU(\tau(n))$ then
$\Lambda(\Diff(T^{2n}\times X))\leq (n+1)/(2n+1)$ for any $n$.
Moreover, if $X$ supports an almost effective action of the
product of $\SU(\tau(n))^r$ then $\Lambda(\Diff(T^{2nr}\times
M))\leq (2+r+4n/r)/(2n+r)$ for any $n$. In particular, for any
$\epsilon>0$ there exist manifolds $Y$ such that
$\Lambda(\Diff(Y))<\epsilon$.

We next describe the main building block in the proof of
Theorem \ref{thm:main}. For any natural number $n$ and any
prime $p$, define $\Gamma_{n,p}$ to be the group generated by
elements $a_1,\dots,a_n,b_1,\dots,b_n,f$ with the relations
$a_i^p=b_i^p=f^p=[a_i,a_j]=[b_i,b_j]=[a_i,f]=[b_i,f]=1$ for
every $i,j$, $[a_i,b_j]=1$ for very $i\neq j$, and
$[a_i,b_i]=f$ for every $i$. The group $\Gamma_{n,p}$ has
$p^{2n+1}$ elements and no abelian subgroup of $\Gamma_{n,p}$
has more than $p^{n+1}$ elements (see Lemma
\ref{lemma:abelian-subgroups-Gamma-n-p} below). We have:

\begin{theorem}
\label{thm:vector-bundles}
Given $n\in\NN$ there exists some $\tau(n),M(n)\in\NN$ such that
for any
prime $p$ satisfying $p>M(n)$ and
$p\equiv 1\mod n+1$ the group $\Gamma_{n,p}$ acts effectively on
the trivial vector bundle $T^{2n}\times\CC^{\tau(n)}$ by vector bundle automorphisms
and leaving invariant a nowhere vanishing section of the determinant bundle
$T^{2n}\times\Lambda^{\tau(n)}\CC^{\tau(n)}$.
\end{theorem}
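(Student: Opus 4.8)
The plan is to take $\tau(n)=n+1$ and to realise the required action on a rank-$(n+1)$ bundle built as a direct sum of $n+1$ line bundles whose determinant weights form a geometric progression governed by an $(n+1)$-st root of unity. I start from the classical theta-group picture. Choose a complex structure making $T^{2n}$ a principally polarised abelian variety and let $L\to T^{2n}$ be a line bundle of polarisation type $(p,\dots,p)$; the finite Heisenberg group attached to $L$ is then isomorphic to $\Gamma_{n,p}$, acting on $L$ by bundle automorphisms covering the translations of the $p$-torsion $T^{2n}[p]\cong\ZZ_p^{2n}$, with the central generator $f$ acting fibrewise by a fixed primitive $p$-th root of unity $\zeta$ (Stone--von Neumann). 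For $n=1$ this is exactly the input of \cite{CPS,Za}. A single such $L$ has rank one and $c_1(L)\ne0$, so its determinant is nontrivial; the whole point is to upgrade it to a \emph{smoothly trivial} bundle of rank independent of $p$, carrying an invariant trivialisation of the determinant.

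This is where the hypothesis $p\equiv 1\pmod{n+1}$ enters. Fix an integer $\xi$ whose class is a primitive $(n+1)$-st root of unity in $\FF_p^\times$. I will produce line bundles $M_0,\dots,M_n$, each carrying a $\Gamma_{n,p}$-action covering the \emph{same} translations, but with $f$ acting on $M_j$ by the scalar $\zeta^{\xi^j}$ instead of $\zeta$. The mechanism is a symplectic \emph{similitude} of multiplier $\xi$: the normaliser of $\Gamma_{n,p}$ inside its theta group surjects onto the symplectic similitude group over $\FF_p$, and an element $\gamma$ of order $n+1$ with multiplier $\xi$ (available precisely because $\FF_p^\times$ contains a primitive $(n+1)$-st root of unity) conjugates the Heisenberg action so that $f\mapsto f^{\xi}$. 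Taking $M_j$ to be the successive $\gamma$-twists of $L$, engineered so that $f$ acts on $M_j$ by $\zeta^{\xi^j}$ and so that the Chern roots of $W:=\bigoplus_{j=0}^n M_j$ are the classes $\xi^j u$ with $u=c_1(L)$, and letting $\Gamma_{n,p}$ act diagonally, the central element acts by $\diag(\zeta,\zeta^{\xi},\dots,\zeta^{\xi^n})$.

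Triviality of $W$ then follows from a root-of-unity identity. Since $1,\xi,\dots,\xi^n$ are \emph{all} the $(n+1)$-st roots of unity, $\prod_{j=0}^n(T-\xi^j)=T^{n+1}-1$, so every elementary symmetric function $e_k(1,\xi,\dots,\xi^n)$ vanishes for $1\le k\le n$. Consequently the total Chern class of $W$ is
\[
c(W)=\prod_{j=0}^n\bigl(1+\xi^j u\bigr)=1+\sum_{k\ge 1}e_k(1,\xi,\dots,\xi^n)\,u^k=1,
\]
because the terms with $1\le k\le n$ vanish while $u^{n+1}\in H^{2n+2}(T^{2n})=0$. As $H^*(T^{2n};\ZZ)$ is torsion-free and the complex rank $n+1$ exceeds half the real dimension of $T^{2n}$, vanishing of all Chern classes forces $W$ to be smoothly trivial, $W\cong T^{2n}\times\CC^{n+1}$. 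For the determinant, $f$ acts on $\det W$ by $\zeta^{1+\xi+\dots+\xi^n}=\zeta^0=1$, since $\sum_j\xi^j\equiv 0\pmod p$; after rescaling the lifts of the $a_i,b_i$ by suitable $p$-th roots of unity (which leaves all commutators, hence the relations of $\Gamma_{n,p}$, unchanged because the rescalings are central) their determinants are also $1$, so the induced action on $\det W\cong T^{2n}\times\Lambda^{n+1}\CC^{n+1}$ is by base translations alone and the constant section is invariant and nowhere vanishing. Effectiveness is then immediate: an element acting trivially must cover the trivial translation, hence lie in $\langle f\rangle$, and $f^k=\diag(\zeta^{k\xi^j})$ is the identity only when $p\mid k$, once $M(n)$ is chosen larger than $n+1$ and than all integers occurring as $\xi^j$ and as ranks, which also supplies the coprimalities used above.

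The main obstacle is the construction of the preceding paragraph: reconciling \emph{nonzero} fibre weights for $f$ (forced by effectiveness) with \emph{topological triviality} of $W$ at a rank that does not grow with $p$. If the $M_j$ were tensor powers $L^{\otimes m_j}$ of a single bundle, their Chern classes would be the proportional integer classes $m_j u$ and the required cancellation would demand $\sum_j m_j^2=0$ over $\ZZ$, which is impossible; genuinely non-proportional Chern classes, produced by the similitude, are therefore indispensable. Note also that a finite-order \emph{integral} similitude necessarily has multiplier $\pm 1$, so the $\xi$-scaling cannot be realised by an automorphism of $T^{2n}$ itself but only on the level-$p$ torsion and on $H^*(T^{2n};\FF_p)$. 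The technical heart, which I would treat through the Weil representation of the normaliser of the Heisenberg group in $\GL(H^0(L))$ together with the cyclotomic action of $\gamma$, is to make the twisted line bundles $M_j$ and the simultaneous $\Gamma_{n,p}$-action explicit via theta structures, and to promote the mod-$p$ vanishing of the symmetric functions of the weights to the \emph{integral} vanishing of the Chern classes of $W$ that topological triviality requires.
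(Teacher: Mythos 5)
Your skeleton is the same as the paper's --- exploit $p\equiv 1\bmod (n+1)$ to produce $(n+1)$ central weights that are the $(n+1)$-st roots of unity, so that the elementary symmetric functions of the corresponding Chern roots vanish and the total Chern class of a direct sum collapses to $1$ --- but the step you defer as ``the technical heart'' is a genuine gap, and it is the entire content of the theorem rather than a technicality. The residues $\xi^j$ live in $\FF_p^\times$, whereas the first Chern classes of actual line bundles $M_j$ on $T^{2n}$ are \emph{integral} classes; the identity $\prod_j(1+\xi^ju)=1$ holds only modulo $p$, and no integral lift $m_0,\dots,m_n$ of $1,\xi,\dots,\xi^n$ can have $e_1=\dots=e_n=0$ in $\ZZ$ (your own observation that $e_1=e_2=0$ forces $\sum_j m_j^2=0$ already rules this out). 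The only line bundles you actually have in hand with $f$ acting by $\zeta^{\xi^j}$ are tensor powers $L^{\otimes m_j}$ with $m_j\equiv\xi^j\pmod p$, which you correctly discard; the ``$\gamma$-twisted'' bundles with non-proportional Chern classes are never constructed (and cannot be obtained by pulling back along a diffeomorphism of $T^{2n}$, since, as you note, the similitude does not act on the torus). So with $\tau(n)=n+1$ and a sum of line bundles the argument cannot be completed.

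The paper closes exactly this gap with two devices absent from your proposal. First, it takes the $(n+1)$-st roots of unity in $(\ZZ_{p^n})^*$ rather than in $\FF_p^\times$, so the symmetric functions of the integer lifts $a_1,\dots,a_{n+1}$ vanish modulo $p^n$ (Proposition~\ref{prop:main-computation}); the line-bundle part is $\bigoplus_jL_{n,p}^{a_jM}$, whose total Chern class is $1+s$ with $s$ divisible by a high power of $p$. Second, the discrepancy $(1+s)^{-1}-1$ is absorbed by auxiliary bundles $G_j(\delta_j)=w^*F_j(\delta_j)$ of rank $jn!$, obtained by symmetrizing pullbacks of bundles on $S^{2j}$ and then pulling back through the $p^{2n}$-fold covering $w(\theta)=(\theta_1^p,\dots,\theta_{2n}^p)$; their Chern classes can be prescribed to be arbitrary multiples of $Mp^{2j}\Omega^j$ in degree $2j$ (Lemma~\ref{lemma:spanning-classes-G}), and they carry $\Gamma_{n,p}$-actions through $\eta$ because $w$ is the quotient by $\ZZ_p^{2n}$. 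Effectiveness survives because $f$ still acts by $\mu^{a_jM}\neq 1$ on each line summand, the resulting rank is $n+1+\tfrac{n(n+1)}{2}n!$ plus a stabilizing trivial summand (not $n+1$), and the invariant determinant section comes from a separate argument (Lemma~\ref{lemma:accio-fibrat-trivial}: an index computation produces logarithms of the cocycle, an averaging over a $\ZZ_p^{2n}$-orbit shows $f$ acts trivially on the determinant, and the action descends through $w$ to a degree-zero, hence trivial, line bundle), not from rescaling generators. Your write-up would need to be rebuilt around ingredients of this kind.
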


To deduce Theorem \ref{thm:main} from Theorem
\ref{thm:vector-bundles} we use some standard constructions of
fiber bundles and a group theoretical result of Olshanskii
\cite{O}.

As explained above, when $n=1$ Theorem \ref{thm:vector-bundles}
follows from the fact that for any degree $d$ the vector bundle
$L_d\oplus L_d^{-1}\to T^2$ is trivial as a smooth vector
bundle. Hence, we may take $M(1)=1$ and $\tau(1)=2$. An
immediate consequence of Theorem \ref{thm:main} is that if a
manifold $X$ has the property that $T^{2}\times X$ is Jordan
then $X$ does not support any almost effective action of
$\SU(2)$. Since any compact connected semisimple Lie group
contains a subgroup isomorphic either to $\SU(2)$ or to
$\SO(3,\RR)\simeq\SU(2)/\{\pm\Id\}$ (see for example Theorem
19.1 in \cite{B}), it follows that $X$ does not admit any
effective action of a compact connected semisimple Lie group.
In view of the main theorem in \cite{M1} this implies the
following.

\begin{corollary}
\label{cor:main} Suppose that $X$ is a $d$-dimensional compact
manifold admitting a finite unramified covering $\wt{X}\to X$
and that there exist classes $\alpha_1,\dots,\alpha_d\in
H^1(\wt{X};\ZZ)$ such that $\alpha_1\cup\dots\cup \alpha_d\neq
0$. Then any compact connected Lie group acting effectively on
$X$ is abelian.
\end{corollary}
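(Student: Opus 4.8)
The plan is to deduce the statement from Theorem~\ref{thm:main}, the structure theory of compact Lie groups, and the main theorem of \cite{M1}, following the remarks preceding the corollary. First I would reduce the problem to excluding almost effective actions of $\SU(2)$. Let $G$ be a compact connected Lie group acting effectively on $X$, and suppose $G$ is not abelian. Its derived subgroup $G'=[G,G]$ is then a nontrivial closed connected semisimple subgroup, and it acts effectively on $X$ because a subgroup of a group acting effectively acts effectively. By Theorem~19.1 in \cite{B}, $G'$ contains a subgroup isomorphic to $\SU(2)$ or to $\SO(3,\RR)\simeq\SU(2)/\{\pm\Id\}$. In the former case $X$ carries an effective action of $\SU(2)$; in the latter, precomposition with the quotient $\SU(2)\to\SO(3,\RR)$ produces an action of $\SU(2)$ on $X$ with kernel $\{\pm\Id\}$. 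Either way $X$ admits an almost effective action of $\SU(2)=\SU(\tau(1))$ with finite kernel $H$.

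Next I would establish that $\Diff(T^2\times X)$ is Jordan. The finite unramified covering $\wt X\to X$ induces a finite unramified covering $T^2\times\wt X\to T^2\times X$, and $\dim(T^2\times X)=d+2$. If $\beta_1,\beta_2\in H^1(T^2;\ZZ)$ denote the two standard generators, then the K\"unneth formula gives
$$\beta_1\cup\beta_2\cup\alpha_1\cup\dots\cup\alpha_d\neq 0$$
in $H^{d+2}(T^2\times\wt X;\ZZ)$, since $\beta_1\cup\beta_2$ generates the free top cohomology of $T^2$ and $\alpha_1\cup\dots\cup\alpha_d\neq 0$ by hypothesis. Thus $T^2\times X$ admits a finite unramified covering carrying $\dim(T^2\times X)$ degree-one integral classes with nonzero top cup product, so the main theorem of \cite{M1} applies and shows that $\Diff(T^2\times X)$ is Jordan.

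Finally I would derive a contradiction. Since $H$ is finite, by Dirichlet's theorem (as noted after Theorem~\ref{thm:main}) there are infinitely many primes $p$ with $p>M(1)$, $p\equiv 1\mod 2$, and $p\nmid|H|$. For each such $p$, Theorem~\ref{thm:main} with $n=r=1$ (recall $\tau(1)=2$) produces a $p$-group $\Gamma$ of order $p^{3}$ acting effectively on $T^{2}\times X$ all of whose abelian subgroups have at most $p^{2}$ elements. Letting $p\to\infty$ along these primes contradicts the Jordan property of $\Diff(T^2\times X)$ established above. Hence $X$ admits no almost effective action of $\SU(2)$, and by the first paragraph every compact connected Lie group acting effectively on $X$ is abelian.

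The step requiring the most care is the invocation of \cite{M1}: one must check that the covering $T^2\times\wt X\to T^2\times X$ together with the extended classes $\beta_1,\beta_2,\alpha_1,\dots,\alpha_d$ satisfies precisely the cohomological hypothesis of its main theorem, and that the conclusion supplied there is the Jordan property of the full diffeomorphism group rather than of a cover. One must also confirm that the kernel $H$ of the $\SU(2)$ action is genuinely finite, so that the arithmetic constraints of Theorem~\ref{thm:main} can be met by infinitely many primes. The remaining steps are routine applications of the quoted results.
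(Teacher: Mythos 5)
Your proposal is correct and follows essentially the same route as the paper, which deduces the corollary in the paragraph preceding its statement: reduce to excluding almost effective $\SU(2)$-actions via the semisimple derived subgroup and Theorem 19.1 of \cite{B}, apply Theorem \ref{thm:main} with $n=r=1$ and $\tau(1)=2$, and contradict the Jordan property of $\Diff(T^2\times X)$ supplied by the main theorem of \cite{M1}. Your write-up merely makes explicit the steps the paper leaves implicit (passing the cohomological hypothesis from $X$ to $T^2\times X$ via K\"unneth, and the use of Dirichlet to find admissible primes).
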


This applies in particular to the connected sum of a torus and any other manifold.
Corollary \ref{cor:main}
is not a new result (see \cite[Theorem 2.1]{DS}, \cite[Theorem A]{WW};
see also  \cite[Theorem 2.5]{GLO}, which is slightly more restrictive), but the
proof we obtain is new.

\subsection{Acknowledgements} I wish to thank Artur Travesa for useful conversations on
Proposition 2.7.

\section{Proof of Theorem \ref{thm:vector-bundles}}

Fix some odd prime $p$ and a natural number $n$.

\subsection{The group $\Gamma_{n,p}$ and the bundle $Q_{n,p}$ over the torus $T^{2n}$.}
\label{ss:Heisenberg}
We begin by reviewing the construction of some standard generalizations of Heisenberg
$p$-groups and their action on bundles over $T^{2n}$.
Let $$X=\RR^n\times T^n\times S^1.$$ Define a free action
of $\ZZ^n$ on $X$ by setting, for any $d=(d_1,\dots,d_n)\in\ZZ^n$,
$t\in\RR^n$, $\theta=(\theta_1,\dots,\theta_n)\in T^n$, and $\nu\in S^1$:
\begin{equation}
\label{eq:action-Z-n}
d\cdot (t,\theta,\nu)=\left(t+d,\theta,\theta_1^{d_1p}\dots\theta_n^{d_np}\nu\right).
\end{equation}
Denote by $Q_{n,p}=X/\ZZ^n$ the orbit space of this action. The projection of $X$ to the first
two factors gives $Q_{n,p}$ a structure of principal $S^1$-bundle over $T^{2n}$.

Let $\mu=\exp(2\pi\imag/p)$ and let $\psi_j=(1,\dots,1,\mu,1,\dots,1)\in T^n$, where
the entry $\mu$ is in the $j$-th position. Let $e_1,\dots,e_n$ be the canonical
basis of $\RR^n$. Define
diffeomorphisms $\alpha_1,\dots,\alpha_n,\beta_1,\dots,\beta_n,\phi\in\Diff(X)$ by the
formulas
$$\alpha_j(t,\theta,\nu) = (t+p^{-1}e_j,\theta,\theta_j\nu),\qquad
\beta_j(t,\theta,\nu) = (t,\psi_j\theta,\nu),\qquad
\phi(t,\theta,\nu) = (t,\theta,\mu\nu).
$$
Let $\Gamma$ be the group generated by
$\alpha_1,\dots,\alpha_n,\beta_1,\dots,\beta_n,\phi$. One
easily checks that
$[\alpha_i,\alpha_j]=[\beta_i,\beta_j]=\beta_i^p=1$ for every
$i,j$, that $[\alpha_i,\beta_j]=1$ for every $i\neq j$, and
that $[\alpha_j,\beta_j]=\phi$ for every $j$. Furthermore,
$\phi$ is central in $\Gamma$ and has order $p$, so
$[\alpha_j^p,\beta_j]=1$ for every $j$. Hence each $\alpha_j^p$
is central in $\Gamma$. Let $\Gamma_Z\subseteq \Gamma$ be the
subgroup generated by $\alpha_1^p,\dots,\alpha_r^p$. The action
of $\Gamma_Z$ on $X$ coincides with the action of $\ZZ^n$
defined in (\ref{eq:action-Z-n}): more precisely, we can
identify the diffeomorphism $\alpha_j^p$ with the action of
$e_j\in\ZZ^n$. So the quotient group
$\Gamma_{n,p}=\Gamma/\Gamma_Z$ acts effectively on $Q_{n,p}$
The group $\Gamma_{n,p}$ is obviously the same as the one
defined before the statement of Theorem
\ref{thm:vector-bundles}.

Denote by $\ov{\alpha}_j$, $\ov{\beta}_j$ and $\ov{\phi}$ the classes in $\Gamma_{n,p}$
of the elements $\alpha_j$, $\beta_j$ and $\phi$. The
group $\Gamma_{n,p}$ sits in a short exact sequence
\begin{equation}
\label{eq:exact}
0\to\ZZ_p\to\Gamma_{n,p}\stackrel{\eta}{\longrightarrow}(\ZZ_p)^{2n}\to 0,
\end{equation}
where $\eta(\ov{\alpha}_j)\in\ZZ_p^{2n}$ (resp. $\eta(\ov{\beta}_j)$)
is the tuple with $0$'s everywhere except in the position
$j$ (resp. $r+j$), where the entry is $1$ (consequently, $\eta(\ov{\phi})=0$).
Hence $\Gamma_{n,p}$ has $p^{2n+1}$ elements.
In terms of the standard
symplectic form $\omega:\ZZ_p^{2n}\times \ZZ_p^{2n}\to\ZZ_p$ defined by
\begin{equation}
\label{eq:omega}
\omega((x_1,\dots,x_n,y_1,\dots,y_n),(x_1',\dots,x_n',y_1',\dots,y_n'))=\sum (x_jy_j'-x_j'y_j)
\end{equation}
we have $[\zeta,\gamma]=\phi^{\omega(\eta(\zeta),\eta(\gamma))}$ for any $\zeta,\gamma\in\Gamma_{n,p}$.
Hence any abelian subgroup of $\Gamma_{n,p}$ projects via $\eta$ to an $\omega$-isotropic
subspace of $\ZZ_p^{2n}$. Since $\omega$ is non degenerate, $\omega$-isotropic subspaces of $\ZZ_p^{2n}$ have dimension at
most $n$, so their cardinal is at most $p^n$. We have thus proved the following.

\begin{lemma}
\label{lemma:abelian-subgroups-Gamma-n-p}
No abelian subgroup of $\Gamma_{n,p}$
has more than $p^{n+1}$ elements.
\end{lemma}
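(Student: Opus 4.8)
The plan is simply to assemble the facts already established in the discussion preceding the statement. The essential input is the commutator formula $[\zeta,\gamma]=\ov{\phi}^{\,\omega(\eta(\zeta),\eta(\gamma))}$, valid for all $\zeta,\gamma\in\Gamma_{n,p}$, together with the short exact sequence (\ref{eq:exact}).

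First I would fix an arbitrary abelian subgroup $A\subseteq\Gamma_{n,p}$ and consider its image $W:=\eta(A)\subseteq\ZZ_p^{2n}$. Since $A$ is abelian, $[\zeta,\gamma]=1$ for all $\zeta,\gamma\in A$, so the commutator formula gives $\ov{\phi}^{\,\omega(\eta(\zeta),\eta(\gamma))}=1$; as $\ov{\phi}$ has order $p$ and $\omega$ takes values in $\ZZ_p$, this forces $\omega(\eta(\zeta),\eta(\gamma))=0$. Hence $W$ is an $\omega$-isotropic subspace of $\ZZ_p^{2n}$. Because $\omega$ is a nondegenerate symplectic form on a $2n$-dimensional space, every isotropic subspace has dimension at most $n$, so $|W|\leq p^n$.

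Second I would control the size of $A$ by relating it to $W$ through the restriction of $\eta$. The kernel of $\eta$ is the central copy of $\ZZ_p$ generated by $\ov{\phi}$, so $\eta|_A$ has kernel $A\cap\Ker\eta$ of order at most $p$ and image $W$. Therefore $|A|=|W|\cdot|A\cap\Ker\eta|\leq p^n\cdot p=p^{n+1}$, which is exactly the claimed bound.

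There is no serious obstacle here: all the structural work—the exact sequence, the identification of $\Ker\eta$ with the order-$p$ center generated by $\ov{\phi}$, and the commutator formula expressing brackets in terms of the symplectic form $\omega$—has already been carried out above. The only point requiring a word of care is the passage from isotropy of $W$ to the dimension bound, which is the standard fact that a nondegenerate alternating form on $\ZZ_p^{2n}$ admits no isotropic subspace of dimension exceeding $n$; this follows because an isotropic $W$ satisfies $W\subseteq W^{\perp}$ while $\dim W+\dim W^{\perp}=2n$ by nondegeneracy, forcing $\dim W\leq n$.
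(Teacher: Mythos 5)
Your proof is correct and follows exactly the argument the paper gives just before the lemma: the commutator formula forces $\eta(A)$ to be $\omega$-isotropic, hence of size at most $p^n$, and combining with the order-$p$ kernel of $\eta$ gives $|A|\leq p^{n+1}$. The extra detail you supply on why isotropic subspaces have dimension at most $n$ is a welcome but standard elaboration of the same route.
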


The action of $\Gamma_{n,p}$ on $Q_{n,p}$ lifts an action of
$\Gamma_{n,p}$ on $T^{2n}$ which is not effective, since two
elements $\gamma,\gamma'\in\Gamma_{n,p}$ act via the same
diffeomorphism of $T^{2n}$ if and only if
$\eta(\gamma)=\eta(\gamma')$. Hence the action of
$\Gamma_{n,p}$ induces an effective action of $\ZZ_p^{2n}$ on
$T^{2n}$, which of course is nothing but the diagonal action
\begin{equation}
\label{eq:accio}
(z_1,\dots,z_{2n})\cdot(\theta_1,\dots,\theta_{2n})=
(e^{2\pi\imag z_1/p}\theta_1,\dots,e^{2\pi\imag z_{2n}/p}\theta_{2n}).
\end{equation}
The quotient space of this action of $\ZZ_p^{2n}$ on $T^{2n}$
can be identified with $T^{2n}$ itself, in such a way that the projection to the quotient space is the map
\begin{equation}
\label{eq:projection-quotient}
w:T^{2n}\to T^{2n},\qquad w(\theta_1,\dots,\theta_{2n})=(\theta_1^p,\dots,\theta_{2n}^p).
\end{equation}

\begin{lemma}
\label{lemma:accio-fibrat-trivial} Suppose that $\Gamma_{n,p}$
acts on the trivial line bundle $T^{2n}\times\CC$ by vector
bundle automorphisms lifting the action of $\Gamma_{n,p}$ on
$T^{2n}$. Then there is a nowhere vanishing
$\Gamma_{n,p}$-equivariant section $\sigma:T^{2n}\to\CC$.
\end{lemma}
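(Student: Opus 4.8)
The plan is to encode the action as a cocycle, reduce the problem to showing that the central generator $\ov{\phi}$ acts trivially on the fibres, and then conclude by descent to the free quotient action. Since $T^{2n}\times\CC$ is a trivial line bundle and $\Gamma_{n,p}$ acts by bundle automorphisms covering the base action of (\ref{eq:accio}), each $\gamma\in\Gamma_{n,p}$ acts by $\gamma\cdot(\theta,v)=(\gamma\cdot\theta,\,g_\gamma(\theta)v)$ for a unique smooth map $g_\gamma\colon T^{2n}\to\CC^*$, and the fact that this is a group action is equivalent to the cocycle identity $g_{\gamma\gamma'}=(g_\gamma\circ\gamma')\,g_{\gamma'}$. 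A nowhere vanishing $\Gamma_{n,p}$-equivariant section is precisely a smooth $\sigma\colon T^{2n}\to\CC^*$ satisfying $\sigma(\gamma\cdot\theta)=g_\gamma(\theta)\,\sigma(\theta)$ for all $\gamma$, so the goal is to trivialise the cocycle $(g_\gamma)$ by such a coboundary.

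The element $\ov{\phi}$ fixes the base pointwise, so it acts on each fibre by a scalar; since $\ov{\phi}^{\,p}=1$ this scalar is a $p$-th root of unity, and by connectedness of $T^{2n}$ it equals a single constant $\lambda$. If $\lambda\neq1$ then evaluating the equivariance relation at any point (all of which are fixed by $\ov{\phi}$) forces $\sigma\equiv0$; hence the whole lemma rests on proving $\lambda=1$, and this is the step I expect to be the main obstacle, being the only place where triviality of the bundle is genuinely used. To prove it I would first exploit $\ov{\alpha}_j^{\,p}=\ov{\beta}_j^{\,p}=1$: the cocycle identity gives $\prod_{k=0}^{p-1}g_{\ov{\alpha}_j}(\ov{\alpha}_j^{\,k}\cdot\theta)=1$ identically, and comparing winding numbers (degrees of maps to $\CC^*$) in each of the $2n$ circle directions yields $p\,\delta(g_{\ov{\alpha}_j})=0$ in $H^1(T^{2n};\ZZ)$, hence $\delta(g_{\ov{\alpha}_j})=0$, and likewise for $\ov{\beta}_j$. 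Therefore these functions admit single valued logarithms $h_{\ov{\alpha}_j}=\log g_{\ov{\alpha}_j}$ and $h_{\ov{\beta}_j}=\log g_{\ov{\beta}_j}$ into $\CC$. Expressing the fibre scalar of the commutator $[\ov{\alpha}_j,\ov{\beta}_j]=\ov{\phi}$ in terms of these logarithms produces a continuous single valued function of $\theta$ whose exponential is the constant $\lambda$, of the schematic form $h_{\ov{\alpha}_j}(\,\cdot\,)+h_{\ov{\beta}_j}(\,\cdot\,)-h_{\ov{\beta}_j}(\,\cdot\,)-h_{\ov{\alpha}_j}(\,\cdot\,)$ with the four arguments base translates of $\theta$; since its values lie in a fixed discrete coset of $2\pi\imag\ZZ$ it is constant, and integrating over $T^{2n}$ and using translation invariance of the integral makes the two $h_{\ov{\alpha}_j}$ terms cancel and the two $h_{\ov{\beta}_j}$ terms cancel. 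Thus this constant is $0$ and $\lambda=1$.

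Once $\lambda=1$ is established the action factors through $\eta\colon\Gamma_{n,p}\to(\ZZ_p)^{2n}$, which acts freely on $T^{2n}$ with quotient the map $w$ of (\ref{eq:projection-quotient}). The line bundle then descends to a complex line bundle $\ov{L}$ over $T^{2n}/(\ZZ_p)^{2n}\cong T^{2n}$, and a $\Gamma_{n,p}$-equivariant nowhere vanishing section upstairs is the same thing as a trivialisation of $\ov{L}$. To finish I would show $\ov{L}$ is trivial: by construction $w^*\ov{L}$ is the original trivial bundle, so $w^*c_1(\ov{L})=0$, while $w$ has degree $p$ on each circle factor, so $w^*$ acts as multiplication by $p^2$ on $H^2(T^{2n};\ZZ)$ and is injective. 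Hence $c_1(\ov{L})=0$, the bundle $\ov{L}$ is smoothly trivial, and any nowhere vanishing section of $\ov{L}$ pulls back under $w$ to the required $\Gamma_{n,p}$-equivariant nowhere vanishing section $\sigma\colon T^{2n}\to\CC$.
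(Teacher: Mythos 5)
Your proposal is correct and follows essentially the same route as the paper: encode the action as a $\CC^*$-valued cocycle, use the finite order of the group elements together with homotopy invariance of the winding class to produce single-valued logarithms, average the logarithmic form of the commutator relation $ab=baf$ over a set invariant under the translations $a$ and $b$ to force $\ov{\phi}$ to act trivially, and then descend along $w$ and use injectivity of $w^*$ on $H^2$ to trivialise the quotient bundle. The only (immaterial) differences are that you average by integrating over all of $T^{2n}$ where the paper sums over a single finite $\ZZ_p^{2n}$-orbit, and you construct logarithms only for the generators via $\gamma^p=1$ where the paper does it for every element via $\gamma^{\ord(\gamma)+1}=\gamma$.
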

That $\sigma$ is an  equivariant section means that $\gamma\cdot (\theta,\sigma(\theta))=(\gamma\cdot\theta,\sigma(\gamma\cdot\theta))$
for every $\gamma\in\Gamma_{n,p}$ and every $\theta\in T^{2n}$.

\begin{proof}
Let $L=T^{2n}\times\CC$.
Take an action of $\Gamma_{n,p}$ on $L$ by vector
bundle automorphisms lifting the action of $\Gamma_{n,p}$ on $T^{2n}$.
There is a unique smooth map $c:\Gamma_{n,p}\times T^{2n}\to\CC^*$ satisfying
$$\gamma\cdot(\theta,w)=(\gamma\cdot\theta,c(\gamma,\theta)w)$$
for every $\gamma\in\Gamma_{n,p}$, $\theta\in T^{2n}$ and
$w\in\CC$. The condition that $c$ defines an action of
$\Gamma_{n,p}$ on $L$ is equivalent to the following cocycle
condition:
$$c(\gamma'\gamma,\theta)=c(\gamma',\gamma\cdot\theta)c(\gamma,\theta)
\qquad\qquad\text{for every $\gamma,\gamma'\in\Gamma_{n,p}$ and
$\theta\in T^{2n}$.}$$ This implies in particular that
$c(1,\theta)=1$ for every $\theta$, where $1\in\Gamma_{n,p}$
denotes the identity element. We claim that for any
$\gamma\in\Gamma_{n,p}$ there is a map
$$\wt{c}_{\gamma}:T^{2n}\to\CC$$
such that $c(\gamma,\theta)=\exp(\wt{c}_{\gamma}(\theta))$ for
every $\theta$. This is obvious if $\gamma=1$, because
$c(1,\theta)=1$. So let us assume that $\gamma\neq 1$ and that,
contrary to the claim, the map $\wt{c}_{\gamma}$ does not
exist. Then there exists some $\gamma\in\Gamma_{n,p}$ and a map
$h:S^1\to T^{2n}$ so that the map $c_{\gamma,h}:S^1\to\CC^*$
defined as $c_{\gamma,h}(\chi)=c(\gamma,h(\chi))$ has nonzero
index: $\Ind(c_{\gamma,h})\neq 0$. For any
$\delta\in\Gamma_{n,p}$ let $\mu_{\delta}:T^{2n}\to T^{2n}$ be
the map $\theta\mapsto\delta\cdot\theta$. Since $\mu_{\delta}$
is homotopic to the identity, we have
\begin{equation}
\label{eq:invariancia-index}
\Ind(c_{\gamma,\mu_{\delta}\circ h})=\Ind(c_{\gamma,h}).
\end{equation}
Using the cocycle condition and induction on $k\in\NN$ we obtain
$$c(\gamma^k,\theta)=\prod_{j=0}^{k-1}c(\gamma,\gamma^j\cdot\theta)$$
for every $\gamma$ and $k$. Taking $k=\ord(\gamma)+1$ (so that
$\gamma^k=\gamma$), applying the previous formula to
$\theta=h(\chi)$ for each $\chi\in S^1$, and using the fact
that the index of maps $S^1\to\CC^*$ is additive with respect
to pointwise multiplication, we deduce
$$\Ind(c_{\gamma,h})=\sum_{j=0}^{k-1}\Ind(c_{\gamma,\mu_{\gamma^j}\circ h})=k\Ind(c_{\gamma,h}),$$
where the second equality follows from
(\ref{eq:invariancia-index}). Since $\gamma\neq 1$, we have
$k\geq 2$ and hence $\Ind(c_{\gamma,h})=0$, contrary to our
assumption. So the claim is proved.

Now let $a=\ov{\alpha}_1$, $b=\ov{\beta}_1$ and $f=\ov{\phi}$.
We next  claim  that the action of $f$ on $L$ is trivial (so in
particular the action of $\Gamma_{n,p}$ is not effective). In
order to prove the claim, note that since $f$ acts trivially on
$T^{2n}$, its action on $L$ will be given by
$f\cdot(\theta,w)=(\theta,g(\theta)w)$ for some smooth map
$g:T^{2n}\to\CC^*$. Since $f$ has order $p$, $g$ must take
values in the set of $p$-roots of unity. Applying the cocycle
condition to both sides of the equality $ab=baf$ we obtain
$$c(a,b\cdot\theta)c(b,\theta)=c(ba,f\cdot\theta)c(f,\theta)=
c(ba,\theta)g(\theta)=c(b,a\cdot\theta)c(a,\theta)g(\theta)$$
for any $\theta$. It follows that
\begin{equation}
\label{eq:cocycle-commutador}
\wt{c}_a(b\cdot\theta)+\wt{c}_b(\theta)=
\wt{c}_b(a\cdot\theta)+\wt{c}_a(\theta)+\wt{g}(\theta),
\end{equation}
where $\theta$ is arbitrary and $\wt{g}:T^{2n}\to\CC$ satisfies
$\exp(\wt{g}(\theta))=g(\theta)$, so
$\exp(p\wt{g}(\theta))=g(\theta)^p=1$ for every $\theta$. Since
each $\wt{c}_{\gamma}$ is smooth, (\ref{eq:cocycle-commutador})
implies that $\wt{g}$ is smooth, and hence the condition
$\exp(p\wt{g}(\theta))=1$ implies that $\wt{g}$ must be equal
to some constant, say $\wt{g}_0\in\CC$. Now let $q\in T^{2n}$
be any point and let $\Theta=\Gamma_{n,p}\cdot
q=\ZZ_p^{2n}\cdot q\subset T^{2n}$ be its orbit. Summing both
sides of (\ref{eq:cocycle-commutador}) as $\theta$ runs over
the elements of $\Theta$ and using the fact that $\Theta$ is
invariant under the action of both $a$ and $b$ we deduce that
$0=\sum_{\theta\in\Theta}\wt{g}(\theta)=|\Theta|\wt{g}_0$.
Hence $\wt{g}_0=0$, so $g(\theta)=1$ for every $\theta$ and the
claim is proved.

The previous claim implies that the action of $\Gamma_{n,p}$ on
$L$ factors through the morphism
$\eta:\Gamma_{n,p}\to\ZZ_p^{2n}$. Since $\ZZ_p^{2n}$ acts
freely on $T^{2n}$, the quotient $L/\ZZ_p^{2n}$ has a natural
structure of line bundle over $T^{2n}/\ZZ_p^{2n}$. In other
words, there must exist some line bundle $\Lambda\to T^{2n}$
and a $\Gamma_{n,p}$-equivariant isomorphism $L\simeq
w^*\Lambda$, where $w$ is the map
(\ref{eq:projection-quotient}). This implies that
$c_1(L)=w^*c_1(\Lambda)$. But $w^*:H^2(T^{2n};\ZZ)\to
H^2(T^{2n};\ZZ)$ is multiplication by $p^2$, and hence is
injective. Since $c_1(L)=0$, it follows that $c_1(\Lambda)=0$.
Hence $\Lambda$ is the trivial line bundle. Taking a nowhere
vanishing section $s$ of $\Lambda$, we obtain by pullback the
desired nowhere vanishing $\Gamma_{n,p}$-equivariant section
$\sigma$ of $L$.
\end{proof}

\subsection{Cohomology of $T^{2n}$} Denote the line bundle associated to $Q_{n,p}$ by
$$L_{n,p}=Q_{n,p}\times_{S^1}\CC,$$
where $S^1$ acts on $\CC$ with weight $1$. Choose a generator
$\sigma\in H^1(S^1;\ZZ)$. Let $\pi_j:T^{2n}=(S^1)^{2n}\to S^1$
denote the projection to the $j$-th factor and define
cohomology classes $u_i,v_i\in H^1(T^{2n};\ZZ)$ for any $1\leq
i\leq n$ by $u_i=\pi_i^*\sigma$ and $v_i=\pi_{n+i}^*\sigma$.
Define also the class
$$\Omega=\sum_{i=1}^n u_i\cup v_i\in H^2(T^{2n};\ZZ).$$

\begin{lemma}
We have $c_1(L_{n,p})=p\Omega$.
\end{lemma}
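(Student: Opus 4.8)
The plan is to compute $c_1(L_{n,p})$ by Chern--Weil theory after making the bundle completely explicit as a quotient. First I would unwind the associated bundle construction at the level of $X$, before passing to the $\ZZ^n$-quotient. Forming $X\times_{S^1}\CC$ first and using the weight-one identification $[(\nu,w)]\mapsto \nu w$ to trivialize it as $\RR^n\times T^n\times\CC$, the residual $\ZZ^n$-action becomes
$$d\cdot(t,\theta,z)=\Bigl(t+d,\;\theta,\;\theta_1^{d_1p}\cdots\theta_n^{d_np}\,z\Bigr),$$
so that $L_{n,p}=(\RR^n\times T^n\times\CC)/\ZZ^n$ is exactly the line bundle over $T^{2n}=(\RR^n/\ZZ^n)\times T^n$ attached to the factor of automorphy $j(d,(t,\theta))=\prod_j\theta_j^{d_jp}$. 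Writing $\theta_j=\exp(2\pi\imag x_j)$, note that $|j|\equiv 1$, so the standard Hermitian metric on $\CC$ is $\ZZ^n$-invariant and descends to $L_{n,p}$. (The multiplicative shape of $j$ over the index $j$ also exhibits $L_{n,p}$ as a tensor product of pullbacks of $n$ bundles, one per $T^2$ factor, which is why the final answer is a sum; I may record this but do not need it for the computation.)

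Second, I would produce an explicit invariant unitary connection. Smooth sections of $L_{n,p}$ are functions $\xi$ on $\RR^n\times T^n$ with $\xi(t+d,\theta)=j(d,(t,\theta))\,\xi(t,\theta)$, and requiring $\nabla=d+\mathcal A$ to preserve this equivariance forces $\mathcal A(t+d,\theta)=\mathcal A(t,\theta)-d\log j(d)=\mathcal A(t,\theta)-2\pi\imag\,p\sum_j d_j\,dx_j$. The purely imaginary form $\mathcal A=-2\pi\imag\,p\sum_j t_j\,dx_j$ satisfies this relation, so it defines a unitary connection on $L_{n,p}$, whose curvature is the basic (manifestly $\ZZ^n$-invariant) two-form $F=d\mathcal A=-2\pi\imag\,p\sum_j dt_j\wedge dx_j$.

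Third, the Chern--Weil representative yields $c_1(L_{n,p})=\bigl[\tfrac{\imag}{2\pi}F\bigr]=p\sum_j[dt_j\wedge dx_j]$ in de Rham cohomology. Under the identification of the first $n$ circle factors of $T^{2n}$ with the $t$-coordinates and the last $n$ with the $\theta$-coordinates, the chosen generator $\sigma$ pulls back to $dt_j$ and $dx_j$ respectively, so $u_j=[dt_j]$, $v_j=[dx_j]$, whence $\sum_j[dt_j\wedge dx_j]=\sum_j u_j\cup v_j=\Omega$. Thus $c_1(L_{n,p})=p\Omega$ in $H^2(T^{2n};\RR)$, and since $H^2(T^{2n};\ZZ)$ is torsion-free the natural map to real coefficients is injective, so the equality holds already in integral cohomology.

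The curvature computation and the cohomological bookkeeping are routine; the only place I expect a genuine pitfall is pinning down signs and normalizations consistently — in particular verifying that $\mathcal A$ really descends (equivalently, that the automorphy factor I extracted carries weight $+1$ and not $-1$) so that each $T^2$ factor contributes $+p$ rather than $-p$ or $p^2$. As a sanity check I would specialize to $n=1$ and confirm $\int_{T^2}c_1(L_{1,p})=p\int_{T^2}dt\wedge dx=p$, consistent with $L_{1,p}$ being a degree-$p$ line bundle over the torus.
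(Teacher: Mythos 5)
Your proof is correct and is essentially the paper's argument: both produce an explicit $\ZZ^n$- (resp.\ $\ZZ$-) invariant connection of the form $-\imag p\,t\,d\theta$ on the quotient presentation of the bundle and read off $c_1$ from the Chern--Weil curvature, the only difference being that you treat all $n$ factors simultaneously on the associated line bundle while the paper first reduces to $n=1$ via $L_{n,p}=\bigotimes_i\Pi_i^*L_{1,p}$ and computes on the principal bundle. Your explicit remarks on the descent condition for $\mathcal A$ and on torsion-freeness of $H^2(T^{2n};\ZZ)$ are correct and merely make explicit what the paper leaves implicit.
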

\begin{proof}
Let $\Pi_i=(\pi_{i},\pi_{n+i}):T^{2n}\to S^1\times S^1=T^2$. It
follows from the definition of $Q_{n,p}$ that
$L_{n,p}=\bigotimes_{i=1}^n\Pi_i^*L_{1,p}.$ Hence it suffices
to prove the lemma in the case $n=1$. Using the notation of
Subsection \ref{ss:Heisenberg} we identify $Q_{1,p}$ with the
quotient of the trivial principal bundle
$$\pi:\wt{Q}_{1,p}=\RR\times S^1\times S^1\to\RR\times S^1,$$
where $\pi$ is the projection to the first two factors, under
the action of $\ZZ$ given by
$k\cdot(t,\theta,\nu)=(t+k,\theta,\theta^{pk}\nu)$. Then
$d_A:=d-\imag p t\,d\theta$ is a $\ZZ$-invariant connection, so
it descends to a connection on $Q_{1,p}$. Its curvature on
$\wt{Q}_{1,n}$ is equal to $-\imag p\,dt\wedge d\theta$, so we
may compute, using the orientation of $T^{2n}$ given by
$dt\wedge d\theta$
$$\deg Q_{1,n}=\frac{\imag}{2\pi}\int_0^1\left(\int_{S^1}-\imag p\,d\theta\right)\,dt=p.$$
Since the integral of $\Omega$ with respect to the same orientation is equal to $1$,
the result follows.
\end{proof}

For any $m\in\NN$ we define $[m]:=\{1,\dots,m\}$.
For any subset $I=\{i_1<\dots<i_k\}\subset[n]$
define $u_I=u_{i_1}\cup\dots \cup u_{i_k}$ and
$v_I=v_{i_1}\cup\dots \cup v_{i_k}$. A simple computation shows that
\begin{equation}
\label{eq:power-Omega}
\Omega^k=\frac{n!}{(n-k)!}\sum_{I\subset[n],\,|I|=k}(-1)^k u_I\cup v_I.
\end{equation}

Given a permutation $\sigma\in S_n$ define a new permutation
$\sigma'\in S_{2n}$ by the condition $\sigma'(i)=\sigma(i)$ and
$\sigma'(n+i)=n+\sigma(i)$ for very $1\leq i\leq n$. Let
$\nu_{\sigma}:T^{2n}\to T^{2n}$ be the diffeomorphism defined
by
$$\nu_{\sigma}(\theta_1,\dots,\theta_{2n})=
(\theta_{\sigma'(1)},\dots,\theta_{\sigma'(2n)}).$$
For any real number $t$ denote by $\la t\ra$ the integer part of $t$.

\begin{lemma}
\label{lemma:symmetrization} For any $k\in [n]$ there exist
rational numbers $\{a_{k,1},\dots,a_{k,\la n/k\ra}\}$, where
each $a_{k,j}$ depends on $k,j,n$ but not on $p$, such that
$$\prod_{\sigma\in S_n}(1+\nu_\sigma^*(u_{[k]}\cup v_{[k]}))=
1+\sum_{j=1}^{\la n/k\ra}a_{k,j}\Omega^{jk}.$$
Furthermore, $a_{k,1}\neq 0$.
\end{lemma}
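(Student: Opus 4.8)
The plan is to turn the symmetrized product over $S_n$ into an ordinary product indexed by the $k$-element subsets of $[n]$, exploit nilpotency to linearize each factor, and then expand, reading off the coefficients $a_{k,j}$ from (\ref{eq:power-Omega}). First I would record how $\nu_\sigma$ acts on cohomology: since $\nu_\sigma^*u_i=u_{\sigma(i)}$ and $\nu_\sigma^*v_i=v_{\sigma(i)}$, the class $\nu_\sigma^*(u_{[k]}\cup v_{[k]})$ equals $u_{\sigma(1)}\cup\dots\cup u_{\sigma(k)}\cup v_{\sigma(1)}\cup\dots\cup v_{\sigma(k)}$, and the two sign changes needed to reorder the $u$'s and the $v$'s into increasing order cancel, so this is exactly $x_J:=u_J\cup v_J$ with $J=\sigma([k])$. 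For a fixed $k$-subset $J$ there are precisely $k!\,(n-k)!$ permutations with $\sigma([k])=J$, so the left-hand side becomes $\prod_{|J|=k}(1+x_J)^{k!(n-k)!}$. Since $u_J$ is repeated in $x_J^2$ we have $x_J^2=0$, whence $(1+x_J)^{k!(n-k)!}=1+k!(n-k)!\,x_J$, and the product collapses to $\prod_{|J|=k}(1+N\,x_J)$ with $N=k!(n-k)!$.

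Next I would expand this product. A monomial $x_{J_1}\cdots x_{J_m}$ vanishes unless the $J_a$ are pairwise disjoint (otherwise some $u_i$ repeats), and disjointness of $m$ sets of size $k$ inside $[n]$ forces $m\le\la n/k\ra$, which is exactly the range of the asserted sum. This is where the key computation enters: for pairwise disjoint $J_1,\dots,J_m$ with union $I$, moving all $u$'s to the left gives
$$x_{J_1}\cdots x_{J_m}=(-1)^{k\binom{m}{2}}\,u_I\cup v_I,$$
and the crucial feature is that the sign depends only on $k$ and $m$, not on the particular partition of $I$.

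I expect this sign bookkeeping to be the main obstacle. One must check that each degree-$k$ factor $v_{J_a}$ contributes a sign, when moved past the $u_{J_b}$ with $b>a$, that is insensitive to which elements lie in which block, and that the subsequent reordering of the $u$'s into $u_I$ and of the $v$'s into $v_I$ contributes equal signs that cancel. That the classes $x_J$ are of even degree, hence commute, is what makes the unordered product above well defined and lets me group factors by their union $I$.

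Finally I would assemble the coefficients. For fixed $m$ the number of unordered collections of disjoint $k$-subsets with a given union $I$ of size $mk$ is $P(m,k):=(mk)!/((k!)^m m!)$, independent of $I$, so the degree-$2mk$ part of the expansion is $N^m(-1)^{k\binom{m}{2}}P(m,k)\sum_{|I|=mk}u_I\cup v_I$. Substituting (\ref{eq:power-Omega}) to replace $\sum_{|I|=mk}u_I\cup v_I$ by a rational multiple of $\Omega^{mk}$ yields the identity with
$$a_{k,j}=N^{\,j}(-1)^{k\binom{j}{2}}P(j,k)\,(-1)^{jk}\,\frac{(n-jk)!}{n!},$$
which is manifestly a rational number independent of $p$, all of whose ingredients are factorials, binomials and signs in $n,k,j$ alone. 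Every factor here is nonzero, so in fact each $a_{k,j}\ne 0$; in particular the $j=1$ term gives $a_{k,1}=(-1)^k k!\,((n-k)!)^2/n!\ne 0$, which is the last assertion of the lemma.
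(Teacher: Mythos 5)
The proposal is correct and is essentially a fully worked-out version of the paper's one-line proof: both rest on the $S_n$-symmetry of the product together with the identity (\ref{eq:power-Omega}), and your observation that the sign $(-1)^{k\binom{m}{2}}$ arising from $x_{J_1}\cdots x_{J_m}=(-1)^{k\binom{m}{2}}u_I\cup v_I$ depends only on $k$ and $m$ (not on the partition of $I$) is precisely the point that the paper leaves implicit and that makes the degree-$2jk$ part of the expansion a single multiple of $\sum_{|I|=jk}u_I\cup v_I$. One caveat on the closed form you give for $a_{k,j}$: it inherits the normalization of (\ref{eq:power-Omega}) as printed, which is off (for $k=1$ it would read $\Omega=-n\,\Omega$; the correct identity is $\Omega^k=(-1)^{\binom{k}{2}}k!\sum_{|I|=k}u_I\cup v_I$), but since the lemma only requires each $\Omega^{jk}$ to be a \emph{nonzero} rational multiple of $\sum_{|I|=jk}u_I\cup v_I$, this affects neither your argument nor your stronger conclusion that every $a_{k,j}$ is nonzero.
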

\begin{proof}
The formula in the lemma follows from (\ref{eq:power-Omega}).
The nonvanishing of $a_{k,1}$ is a consequence of the fact that
if for some $\sigma\in S_n$ we have $\sigma([k])=[k]$ then
$\nu_{\sigma}^*(u_{[k]}\cup v_{[k]})=u_{[k]}\cup v_{[k]}$ i.e.,
there is no minus sign for any choice of $\sigma$.
\end{proof}

\subsection{Some equivariant vector bundles over $T^{2n}$}

\begin{lemma}
\label{lemma:bundle-over-sphere} Let $\sigma_k\in
H^k(S^{2k};\ZZ)$ be a generator. For any natural number $k$ and
any integer $\delta$ there exists a complex vector bundle
$E_k(\delta)\to S^{2k}$ of rank $k$ satisfying
$c_k(E_k(\delta))=\delta(k-1)!\sigma_k$,
\end{lemma}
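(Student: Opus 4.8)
The plan is to construct $E_k(\delta)$ by $K$-theory, deriving the constant $(k-1)!$ directly from the Chern character rather than quoting it; here $\sigma_k$ is understood as a generator of $H^{2k}(S^{2k};\ZZ)$, the only nonvanishing reduced cohomology group of $S^{2k}$. First I would observe that complex rank-$k$ bundles over $S^{2k}$ are classified by the reduced $K$-theory $\wt{K}(S^{2k})$. Indeed, the fibration $U(k)\to U(k+1)\to S^{2k+1}$ shows that $U(k)\to U(k+1)$ is $2k$-connected, so the induced map $BU(k)\to BU(k+1)$ gives a bijection $[S^{2k},BU(k)]\to[S^{2k},BU(k+1)]$; iterating, one gets $\mathrm{Vect}^k_\CC(S^{2k})=[S^{2k},BU(k)]\cong[S^{2k},BU]=\wt{K}(S^{2k})$. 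Hence every stable class of virtual rank $0$ is realized, after adding $k$, by a unique genuine rank-$k$ bundle over $S^{2k}$.

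Second I would pin down a generator of $\wt{K}(S^{2k})\cong\ZZ$. Let $H\to\CP^1=S^2$ be the tautological line bundle and set $\beta=[H]-1\in\wt{K}(S^2)$. By Bott periodicity the $k$-fold external product $b_k=\beta\boxtimes\dots\boxtimes\beta$, which lives in the reduced $K$-theory of the smash product $S^{2k}$ since $\beta$ is reduced, generates $\wt{K}(S^{2k})$. Its Chern character follows from multiplicativity and $\operatorname{ch}(\beta)=c_1(H)$, a generator $x$ of $H^2(S^2;\ZZ)$: one gets $\operatorname{ch}(b_k)=x\boxtimes\dots\boxtimes x$, a generator of $H^{2k}(S^{2k};\ZZ)$, so $\operatorname{ch}(b_k)=\pm\sigma_k$.

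Now define $E_k(\delta)$ to be the rank-$k$ bundle corresponding under the first step to $\delta\, b_k\in\wt{K}(S^{2k})$. Then $\operatorname{ch}(E_k(\delta))=k+\delta\operatorname{ch}(b_k)=k\pm\delta\sigma_k$, with no intermediate terms since $H^{2j}(S^{2k};\ZZ)=0$ for $0<j<k$. For the same reason $c_1=\dots=c_{k-1}=0$, so Newton's identity relating the power sums and elementary symmetric functions of the Chern roots gives $p_k=(-1)^{k-1}k\,c_k$, whence the degree-$2k$ part of the Chern character is $\operatorname{ch}_k(E_k(\delta))=p_k/k!=(-1)^{k-1}c_k(E_k(\delta))/(k-1)!$. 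Comparing with $\operatorname{ch}_k(E_k(\delta))=\pm\delta\sigma_k$ yields $c_k(E_k(\delta))=\pm\delta(k-1)!\,\sigma_k$; since $\delta$ ranges over all of $\ZZ$ and $\sigma_k$ is only fixed up to sign, we may arrange $c_k(E_k(\delta))=\delta(k-1)!\,\sigma_k$, as required.

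The step that requires the most care is the borderline stable-range argument in the first paragraph: $S^{2k}$ has dimension exactly $2k$, the boundary of the range in which $\mathrm{Vect}^k_\CC\to\wt{K}$ is bijective, so one must check the connectivity count precisely (bijectivity needs $\dim\le 2k$, which here holds with equality). Everything else is routine characteristic-class bookkeeping. As an alternative I would note that the whole statement also follows from the classical theorem of Bott that the image of the top Chern class $c_k\colon\pi_{2k-1}(U(k))\to H^{2k}(S^{2k};\ZZ)$ is exactly $(k-1)!\,\ZZ$, obtained via the clutching construction; the argument above is in effect a derivation of that fact.
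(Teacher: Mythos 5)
Your proof is correct and follows essentially the same route as the paper's: pass to $\wt{K}(S^{2k})$, produce a class whose top Chern class is $\delta(k-1)!\sigma_k$, and destabilize to a rank-$k$ bundle using the stable range. The only difference is that you derive the two key inputs (Bott integrality of $c_k$ via the Chern character, and the bijection $\mathrm{Vect}^k_{\CC}(S^{2k})\cong\wt{K}(S^{2k})$ via the connectivity of $U(k)\to U(k+1)$) where the paper simply cites them from Husemoller.
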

\begin{proof}
By \cite[Ch. 20, Corollary 9.8]{H}
there exists some class $\epsilon(\delta)\in \wt{K}(S^{2k})$ with $c_k(\epsilon(\delta))=\delta(k-1)!\sigma_k$.
By \cite[Ch. 9, Theorem 3.8]{H} there exists a vector bundle $\xi\to S^{2k}$ such that $\epsilon(\delta)=[\xi]-[(\rk\xi)\un{\CC}]$.
Finally, by \cite[Ch. 9, Remark 3.7]{H}, the vector bundle
$\xi$ is stably equivalent to a vector bundle on $S^{2k}$ of rank $k$.
We take $E_k(\delta)$ to be any such vector bundle.
\end{proof}

\begin{lemma}
\label{lemma:bundle-over-torus}
For any $k\in [n]$ and any $\delta\in\ZZ$ there exists a vector bundle
$F_k^0(\delta)$ of rank $k$ over $T^{2n}$ satisfying
$$c_k(F_k^0(\delta))=\delta(k-1)!(u_{[k]}\cup v_{[k]})$$
and $c_j(F_k^0(\delta))=0$ for any $j\neq k$.
\end{lemma}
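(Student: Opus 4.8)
The plan is to realize $F_k^0(\delta)$ as the pullback of the bundle $E_k(\delta)\to S^{2k}$ produced in Lemma \ref{lemma:bundle-over-sphere} along a smooth map $f:T^{2n}\to S^{2k}$ that sends a generator $\sigma_k$ of $H^{2k}(S^{2k};\ZZ)$ to the class $u_{[k]}\cup v_{[k]}$. Once such an $f$ is in hand, naturality of Chern classes does all the remaining work.

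First I would construct $f$ as a composite $f=q\circ p$. Let $p:T^{2n}\to T^{2k}$ be the projection onto the $2k$ coordinates indexed by $\{1,\dots,k,n+1,\dots,n+k\}$, normalized so that $p^*$ carries the standard one-dimensional generators of $H^1(T^{2k};\ZZ)$ to $u_1,\dots,u_k,v_1,\dots,v_k$ in this order; then $p^*$ sends the product generator of $H^{2k}(T^{2k};\ZZ)$ to $u_{[k]}\cup v_{[k]}$. Let $q:T^{2k}\to T^{2k}/(T^{2k})^{(2k-1)}\cong S^{2k}$ be the map collapsing the $(2k-1)$-skeleton of the standard CW structure to a point. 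Since the torus has a single top-dimensional cell, the quotient really is $S^{2k}$ and $q$ has degree $\pm1$, so $q^*$ takes a suitable generator $\sigma_k$ to the product generator of $H^{2k}(T^{2k};\ZZ)$. Choosing the sign of $\sigma_k$ accordingly and replacing $f$ by a smooth homotopic representative, we obtain $f^*\sigma_k=p^*q^*\sigma_k=u_{[k]}\cup v_{[k]}$.

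Then I would set $F_k^0(\delta):=f^*E_k(\delta)$, a rank $k$ complex vector bundle over $T^{2n}$. By naturality and Lemma \ref{lemma:bundle-over-sphere},
$$c_k(F_k^0(\delta))=f^*c_k(E_k(\delta))=\delta(k-1)!\,f^*\sigma_k=\delta(k-1)!\,(u_{[k]}\cup v_{[k]}).$$
For $j>k$ we have $c_j(F_k^0(\delta))=0$ because the rank is $k$, and for $1\le j<k$ the vanishing already holds on the sphere, since $c_j(E_k(\delta))\in H^{2j}(S^{2k};\ZZ)=0$ as $0<2j<2k$; hence $c_j(F_k^0(\delta))=f^*c_j(E_k(\delta))=0$. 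This establishes all the claimed Chern classes.

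I expect the one genuinely delicate point to be the verification that the collapse map $q$ pulls a generator of $H^{2k}(S^{2k};\ZZ)$ back to the product of the one-dimensional generators of $H^{2k}(T^{2k};\ZZ)$ with coefficient $\pm1$. Concretely, one must check that $T^{2k}/(T^{2k})^{(2k-1)}$ is homeomorphic to $S^{2k}$ and that $q$ restricts to a homeomorphism on the interior of the unique top cell, so that its degree is $\pm1$. The residual sign is harmless: since the statement allows arbitrary $\delta\in\ZZ$, one may simply fix $\sigma_k$ to be whichever of the two generators yields $f^*\sigma_k=+\,u_{[k]}\cup v_{[k]}$, or equivalently replace $E_k(\delta)$ by $E_k(-\delta)$.
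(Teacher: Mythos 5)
Your proposal is correct and follows essentially the same route as the paper: pull back $E_k(\delta)$ along the composite of the coordinate projection $T^{2n}\to T^{2k}$ with a degree $\pm 1$ map $T^{2k}\to S^{2k}$ (which the paper simply asserts to exist, while you realize it concretely as the collapse of the $(2k-1)$-skeleton), and conclude by naturality of Chern classes. The sign normalization and the vanishing of the lower Chern classes are handled exactly as in the paper.
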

\begin{proof}
Let $\pi^k:T^{2n}\to T^{2k}$ denote the projection
$$(\theta_1,\dots,\theta_n,\theta_{n+1},\dots,\theta_{2n})
\mapsto
(\theta_1,\dots,\theta_k,\theta_{n+1},\dots,\theta_{n+k})$$
and
let $q_k:T^{2k}\to S^{2k}$ be a degree $\pm 1$ map. We may
suppose that the generator $\sigma_k\in H^k(S^{2k};\ZZ)$ in
Lemma \ref{lemma:bundle-over-sphere} satisfies
$q_k^*\sigma_k=u_{[k]}\cup v_{[k]}$. Then we define
$F_k^0(\delta)=(\pi^k)^*q_k^*E_k(\delta)$, where $E_k(\delta)$
is any bundle as given by Lemma \ref{lemma:bundle-over-sphere}.
\end{proof}

Fix for any $k\in [n]$ and any integer $\delta$ a vector bundle $F_k^0(\delta)$
over $T^{2n}$ with the properties specified in Lemma \ref{lemma:bundle-over-torus}.
Define, for any $k$ and $\delta$,
$$F_k(\delta):=\bigoplus_{\sigma\in S_n}\nu_{\sigma}^*F_k^0(\delta).$$
The vector bundle $F_k(\delta)$ has rank $kn!$.
Lemma \ref{lemma:symmetrization} implies that the total Chern class of $F_k(\delta)$ is
$$c(F_k(\delta))=1+\sum_{j=1}^{\la n/k\ra}\delta^ja_{k,j}\Omega^{jk},$$
with $a_{k,1}\neq 0$.

\begin{lemma}
\label{lemma:spanning-classes}
There exists $M\in\NN$, depending only on $n$, with the following property.
Suppose that $b_1,\dots,b_n\in\ZZ$ are all divisible by $M$.
Then there exist $\delta_1,\dots,\delta_n\in\ZZ$ 
with the property that
$$\prod_{j=1}^n c(F_j(\delta_j))
=1+\sum_{j=1}^n b_j\Omega^j.$$
\end{lemma}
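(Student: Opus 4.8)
The plan is to turn the identity into a triangular system of equations obtained by comparing coefficients of powers of $\Omega$, solve it recursively over $\QQ$, and then exploit the divisibility of the $b_j$ to make the solution integral.

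First I would record that each factor $c(F_k(\delta_k))=1+\sum_{i\ge 1}\delta_k^i a_{k,i}\Omega^{ik}$ is a polynomial in $\Omega$ with rational coefficients depending only on $n$, so the product $\prod_{j=1}^n c(F_j(\delta_j))$ is again such a polynomial, say $\sum_{m=0}^n P_m\Omega^m$ with $P_0=1$. By (\ref{eq:power-Omega}) the classes $\Omega^1,\dots,\Omega^n$ are nonzero and lie in distinct cohomology degrees, hence are linearly independent over $\ZZ$; therefore the asserted identity is equivalent to the system $P_m=b_m$ for $m=1,\dots,n$, where each $P_m$ is a polynomial in $\delta_1,\dots,\delta_n$.

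Next I would extract the triangular structure. Expanding the product, $P_m$ is the sum, over all tuples $(i_1,\dots,i_n)$ of nonnegative integers with $\sum_k i_k k=m$, of $\prod_k \delta_k^{i_k}a_{k,i_k}$ (with the convention $a_{k,0}=1$). Since the lowest term of $c(F_k(\delta_k))$ above $1$ is $a_{k,1}\delta_k\Omega^k$, no variable $\delta_k$ with $k>m$ occurs in $P_m$, and the only tuple with $\sum_k i_k=1$ is $i_m=1$, contributing $a_{m,1}\delta_m$. Thus $P_m=a_{m,1}\delta_m+Q_m(\delta_1,\dots,\delta_{m-1})$, where $Q_m$ gathers the remaining tuples. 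Each remaining tuple has all indices $k<m$ and satisfies $\sum_{k<m}i_k k=m$, which forces $\sum_k i_k\ge 2$; hence \emph{every monomial of $Q_m$ has total degree at least $2$ in $\delta_1,\dots,\delta_{m-1}$}. This is the structural fact driving the whole argument.

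Because $a_{m,1}\ne 0$, the system is solved recursively by $\delta_m=(b_m-Q_m(\delta_1,\dots,\delta_{m-1}))/a_{m,1}$, so a rational solution always exists; the real issue — and the step I expect to be the main obstacle — is integrality. I would fix a positive integer $D$, depending only on $n$, such that $Da_{k,i}\in\ZZ$ for all $k,i$ and such that the numerator of each $a_{k,1}$ divides $D$, and set $M=D^4$. Assuming $M\mid b_j$ for all $j$, I would prove by induction on $m$ the strengthened assertion that $\delta_m$ is a well-defined integer divisible by $D^3$. The degree-$\ge 2$ property makes the induction close: if $\delta_1,\dots,\delta_{m-1}$ are divisible by $D^3$, then each monomial of $Q_m$ is a product of at least two of them, and once the (boundedly many) denominators $D$ of its coefficient are cleared one still gets that the term lies in $D^4\ZZ$; since also $b_m\in M\ZZ=D^4\ZZ$ and the numerator of $a_{m,1}$ divides $D$, dividing $b_m-Q_m\in D^4\ZZ$ by $a_{m,1}$ yields $\delta_m\in D^3\ZZ$. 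The base case $m=1$ is the same computation with $Q_1=0$. This produces integers $\delta_1,\dots,\delta_n$ solving $P_m=b_m$, which is exactly the desired identity.
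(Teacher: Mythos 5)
Your proof is correct and follows essentially the same route as the paper's: both solve for $\delta_1,\dots,\delta_n$ triangularly, one power of $\Omega$ at a time, using $a_{k,1}\neq 0$ to isolate $\delta_m$ from the $\Omega^m$-coefficient, and both secure integrality by observing that the nonlinear (degree $\geq 2$) contributions are automatically more divisible than the linear one. The paper organizes this as a descending induction with a chain of constants $m_n,\dots,m_1$, dividing off each factor $c(F_i(\delta_i))$ in turn, while you compare coefficients directly with a single constant $D$ and explicit powers, but the underlying mechanism is identical.
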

\begin{proof}
Denote for any $k\in\NN$ satisfying $k\leq n$ and any $m\in\ZZ$
$$\hH^{\geq k}(m)=\{\alpha\in H^*(T^{2n};\ZZ)\mid \alpha=1+\alpha_k\Omega^k+\dots+\alpha_n\Omega^n,
\,a_j\in m\ZZ\text{ for each $j$}\}.$$
We claim that for any $1\leq k\leq n$ there exists some integer
$m_k$ such that any $\alpha\in\hH^{\geq k}(m_k)$ can be written as
$\prod_{j=k}^n c(F_j(\delta_j))$ for a suitable choice of $\delta_k,\dots,\delta_n\in\ZZ$.
Of course the case $k=1$ is the lemma we want to prove.

We prove the claim by descending induction on $k$.
Consider first the case $k=n$.
Choose $m_n$ so that any element of $m_n\ZZ$ is an integral multiple of $a_{n,1}$.
Since for any integer $b_n$ we have $1+m_nb_n\Omega^j=c(F_n(m_nb_n/a_{n,1}))$ and
$m_nb_n/a_{n,1}$ is an integer, we are done in this case.
Now assume that the claim has been proved for some $k=i+1$ ($1\leq i<n$)
and an integer $m_{i+1}$.
Let $m_i'\in\ZZ$ be chosen in such a way that any element of $m_i'\ZZ$ is an integral
multiple of $a_{i,1}$, let $m_i''\in\ZZ$ be chosen in such a way that
$m_i''a_{i,j}\in m_{i+1}\ZZ$ for every $1<j$, and let $m_i:=m_i'm_i''$.
We next prove that this choice of $m_i$ has the desired property.

If $\alpha=1+\alpha_i\Omega^i+\dots+\alpha_n\in\hH^{\geq i}(m_i)$ then
$\delta_i:=\alpha_i/a_{i,1}$ belongs to $m_i''\ZZ$. Consequently, in the development
$c(F_i(\delta_i))=1+\sum_{j\geq 1}^{\la n/i\ra}\gamma_j\Omega^{ij}$ we have
$\gamma_j\in m_{i+1}\ZZ$ for every $j>1$.
This implies that the series
$\alpha'=1+\sum_{j\geq 1}\alpha_j'\Omega^j$, defined by the property that
$\alpha=c(F_i(\delta_i))\alpha'$, belongs to $\hH^{\geq i+1}(m_{i+1})$. By the induction hypothesis we may write $\alpha'=\prod_{j=i+1}^n c(F_j(\delta_j))$ for some $\delta_{i+1},\dots,\delta_n\in\ZZ$. Hence $\alpha=\prod_{j=i}^n c(F_j(\delta_j))$,
so the claim is proved.
\end{proof}

Let $w:T^{2n}\to T^{2n}$ be the map (\ref{eq:projection-quotient}).
Define for any $k$ and $\delta$
$$G_k(\delta):=w^*{F_k(\delta)}.$$
Since the fibers of $w$ are the orbits of the action of $\ZZ_p^{2n}$ on $T^{2n}$,
the vector bundle $F_k$ carries a natural action of $\ZZ_p^{2n}$ lifting the action on
$T^{2n}$. The action of $\ZZ_p^{2n}$ on $G_k(\delta)$ can be promoted to an action of
$\Gamma_{n,p}$ via the projection map $\eta:\Gamma_{n,p}\to\ZZ_p^{2n}$.
Of course, this action of $\Gamma_{n,p}$ is not effective.

Applying K\"unneth
it follows that the morphism induced in cohomology by $w$ is
\begin{equation}
\label{eq:w-cohomology}
w^i:H^i(T^{2n};\ZZ)\to H^i(T^{2n};\ZZ),\qquad w^i(\alpha)=p^i\alpha.
\end{equation}
Hence we have
$$c(G_k(\delta))=1+\sum_{j=k}^{\la n/k\ra}\delta^j p^{2jk}a_{k,j}\Omega^{jk}.$$
The next lemma follows immediately from Lemma
\ref{lemma:spanning-classes}.

\begin{lemma}
\label{lemma:spanning-classes-G}
There exists $M\in\NN$, depending only on $n$, with the following property.
Suppose that $b_1,\dots,b_n\in\ZZ$ are such that $b_j$ is divisible by $Mp^{2j}$
for each $j$.
Then there exist $\delta_1,\dots,\delta_n\in\ZZ$ 
with the property that
$$\prod_{j=1}^n c(G_j(\delta_j))=1+\sum_{j=1}^n b_j\Omega^j.$$
\end{lemma}

\begin{prop}
\label{prop:main-computation}
Let $M$ be as in Lemma \ref{lemma:spanning-classes-G}.
Suppose that
$p\equiv 1\mod n+1$.
There exist integers
$a_1,\dots,a_{n+1},\delta_1,\dots,\delta_n\in\ZZ$
such that $p$ does not divide any of
the $a_j$'s and furthermore
\begin{equation}
\label{eq:inversa}
\prod_{j=1}^{n+1}(1+a_j M p\Omega)\prod_{j=1}^n c(G_j(\delta_j))=1.
\end{equation}
\end{prop}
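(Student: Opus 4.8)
The plan is to carry out the whole computation inside the truncated polynomial ring $\ZZ[\Omega]/(\Omega^{n+1})$. This is legitimate because every factor occurring in (\ref{eq:inversa}) is a polynomial in $\Omega$ --- the factors $1+a_jMp\Omega$ obviously, and each total Chern class $c(G_j(\delta_j))$ by the formula displayed just before Lemma \ref{lemma:spanning-classes-G} --- and because $\Omega^{n+1}=0$ in $H^*(T^{2n};\ZZ)$, so that all products may be read off modulo $\Omega^{n+1}$. Writing $A=\prod_{j=1}^{n+1}(1+a_jMp\Omega)$, equation (\ref{eq:inversa}) asks exactly that $\prod_{j=1}^n c(G_j(\delta_j))$ equal the inverse $A^{-1}=1+\sum_{j=1}^n b_j\Omega^j$ in $\ZZ[\Omega]/(\Omega^{n+1})$. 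By Lemma \ref{lemma:spanning-classes-G} suitable $\delta_j$ exist provided $Mp^{2j}\mid b_j$ for each $j$. The generating-function identity $\prod_i(1+a_i y)^{-1}=\sum_{j\ge 0}(-1)^j h_j(a)\,y^j$, with $y=Mp\Omega$ and $h_j$ the complete homogeneous symmetric polynomial, gives $b_j=(-1)^j(Mp)^j h_j(a_1,\dots,a_{n+1})$; since $b_j=(-1)^jM^jp^j h_j$, the divisibility $Mp^{2j}\mid b_j$ holds as soon as $p^j\mid h_j(a)$. Thus the proposition reduces to the purely arithmetic task of finding integers $a_1,\dots,a_{n+1}$, none divisible by $p$, with $p^j\mid h_j(a_1,\dots,a_{n+1})$ for $1\le j\le n$.

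I would first trade the condition on the $h_j$ for one on the elementary symmetric polynomials $e_k$. From $\bigl(\sum_k(-1)^ke_kt^k\bigr)\bigl(\sum_j h_jt^j\bigr)=1$ one reads off the recursion $h_j=\sum_{k=1}^j(-1)^{k-1}e_k(a)\,h_{j-k}$, and an easy induction on $j$ then shows that if $p^k\mid e_k(a)$ for all $1\le k\le n$ then $p^j\mid h_j(a)$ for all $1\le j\le n$: each summand $e_k h_{j-k}$ is divisible by $p^k\cdot p^{j-k}=p^j$. So it is enough to find $a_1,\dots,a_{n+1}$, prime to $p$, with $p^k\mid e_k(a)$ for $1\le k\le n$.

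This is where the hypothesis $p\equiv 1\bmod n+1$ is used, and it is the crux of the argument. Because $n+1\mid p-1$ and $0<n+1<p$, the polynomial $t^{n+1}-1$ has $n+1$ distinct roots modulo $p$ and its derivative $(n+1)t^n$ is a unit at each of them, so by Hensel's lemma they lift to $n+1$ distinct $(n+1)$-th roots of unity $1,\omega,\dots,\omega^n$ in the $p$-adic integers $\ZZ_p$, with $\omega$ primitive. Over the domain $\ZZ_p$ the exact factorization $\prod_{i=0}^n(t-\omega^i)=t^{n+1}-1$ holds, whence $e_k(\omega^0,\dots,\omega^n)=0$ for every $1\le k\le n$. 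I would then pick ordinary integers $a_i$ with $a_i\equiv\omega^{i-1}\pmod{p^n}$. Since every $e_k$ has integer coefficients, $e_k(a_1,\dots,a_{n+1})\equiv e_k(\omega^0,\dots,\omega^n)=0\pmod{p^n}$, so in particular $p^k\mid e_k(a)$ for $1\le k\le n$; and as $\omega$ is a unit, $a_i\equiv\omega^{i-1}\not\equiv0\pmod p$, so no $a_i$ is divisible by $p$.

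With these $a_i$ everything fits together: the $e_k(a)$ have the required divisibility, hence so do the $h_j(a)$ and therefore the coefficients $b_j$ of $A^{-1}$; Lemma \ref{lemma:spanning-classes-G} then furnishes $\delta_1,\dots,\delta_n$ with $\prod_{j=1}^n c(G_j(\delta_j))=A^{-1}$, and multiplying by $A$ yields (\ref{eq:inversa}), the $a_j$ being prime to $p$ as demanded. The single creative point, and the main obstacle, is the third paragraph: seeing that one must achieve the sharper divisibility $p^k\mid e_k$ rather than merely $p\mid e_k$, and that this is obtained by lifting the $(n+1)$-th roots of unity $p$-adically (to precision $p^n$) instead of only modulo $p$ --- the naive choice of roots of unity modulo $p$ gives just $p\mid e_k$, which is too weak.
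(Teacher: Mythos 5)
Your proposal is correct and follows essentially the same route as the paper: both choose $a_1,\dots,a_{n+1}$ congruent modulo $p^n$ to the $n+1$ distinct $(n+1)$-th roots of unity (which exist because $p\equiv 1\bmod n+1$), deduce that $e_k(a)\equiv 0\pmod{p^n}$ for $1\le k\le n$, conclude that the coefficients of $\bigl(\prod_j(1+a_jMp\Omega)\bigr)^{-1}$ are divisible by $Mp^{2j}$, and finish with Lemma \ref{lemma:spanning-classes-G}. The only (harmless) divergences are that you verify the vanishing of the $e_k$ via Hensel lifting and the factorization of $t^{n+1}-1$ over the $p$-adic integers, where the paper uses cyclicity of $(\ZZ_{p^n})^*$ and the permutation trick $\alpha^j\sigma_j=\sigma_j$, and that you spell out, via the $h_j$/$e_k$ recursion, the divisibility of the $b_j$ that the paper declares ``easy to prove.''
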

\begin{proof}
Since $p\equiv 1\mod n+1$, the group of invertible elements
$(\ZZ_{p^n})^*$ in $\ZZ_{p^n}$ has order $(p-1)p^{n-1}$
divisible by $n+1$. Since $(\ZZ_{p^n})^*$ is cyclic
(equivalently, $(\ZZ_{p^n})^*$ has primitive roots, see e.g.
\cite[\S 2.8]{NZM}), this implies that $R=\{\alpha\in
(\ZZ_{p^n})^*\mid \alpha^{n+1}=1\}$ is a cyclic subgroup of
$(\ZZ_{p^n})^*$ of $n+1$ elements. Let
$R=\{\alpha_1,\dots,\alpha_{n+1}\}$, choose integers
$a_1,\dots,a_{n+1}$ such that $a_j\equiv \alpha_j\mod p^n$ for
each $j$, and define $s_1,\dots,s_n$ by the condition
$$\prod_{j=1}^{n+1}(1+a_j M p\Omega)=1+\sum_{j=1}^n s_j M^j p^j\Omega^j.$$
Clearly, $s_j$ can be identified with the $j$-th symmetric
function $\sigma_j(a_1,\dots,a_{n+1})$. We claim that each
$\sigma_j(a_1,\dots,a_{n+1})$ is divisible by $p^n$.
Equivalently, $\sigma_j(\alpha_1,\dots,\alpha_{n+1})=0$ in
$\ZZ_{p^n}$. Let $\alpha\in R$ be a generator. Since
multiplication by $\alpha$ induces a permutation of the elements of $R$, we
have, for any $1\leq j\leq n$,
$\alpha^j\sigma_j(\alpha_1,\dots,\alpha_{n+1})=
\sigma_j(\alpha\alpha_1,\dots,\alpha\alpha_{n+1})=
\sigma_j(\alpha_1,\dots,\alpha_{n+1}),$
which implies
\begin{equation}
\label{eq:permutacio}
(\alpha^j-1)\sigma_j(\alpha_1,\dots,\alpha_{n+1})=0.
\end{equation}
We next prove that $\alpha^j-1$ is an invertible element of
$\ZZ_{p^n}$. Since $\alpha$ is a generator of $S$ and
$j\leq n$, we have $\alpha^j\neq 1$. So if $\alpha^j-1$ were
not invertible then we could write $\alpha^j=1+\beta p^e$ for
some $1\leq e\leq n-1$ and some invertible $\beta\in\ZZ_{p^n}^*$. But
then $(\alpha^j)^{n+1}=1+(n+1)\beta p^e+\beta'p^{e+1}$ for some
$\beta'$. Since $p$ does not divide $n+1$, it follows that
$(n+1)\beta\in\ZZ_{p^n}^*$, so
$(\alpha^{n+1})^j=(\alpha^j)^{n+1}\neq 1$, a contradiction.
Finally, since $\alpha^j-1$ is invertible, (\ref{eq:permutacio}) implies
$\sigma_j(\alpha_1,\dots,\alpha_{n+1})=0$, which is what we
wanted to prove.

Let now $$s:=\sum_{j=1}^n s_j M^j p^j\Omega^j$$
and define integers $b_1,\dots,b_n$ by the condition that
$$\sum_{k\geq 1}(-1)^ks^k=\sum_{j=1}^n b_j\Omega^j.$$
It is easy to prove, using the fact that each $s_j$ is
divisible by $p^n$, that $b_j$ is divisible by $Mp^{2j}$ for
each $j$. By Lemma \ref{lemma:spanning-classes-G}, there exist
integers $\delta_1,\dots,\delta_n$ 
such that
$$\prod_{j=1}^n c(G_j(\delta_j))=1+\sum_{j=1}^n b_j\Omega^j.$$
Since
$1+\sum b_j\Omega^j=1+\sum_{k\geq 1}(-1)^ks^k$ is the inverse of $1+s=\prod(1+a_j M p\Omega)$,
the numbers $a_i$, $\delta_j$ and $e_k$ satisfy (\ref{eq:inversa}).
\end{proof}

\subsection{Trivial equivariant vector bundles on $T^{2n}$: proof of Theorem \ref{thm:vector-bundles}}

Let $\uC^r$ denote the trivial complex vector bundle of rank $r$ over the torus $T^{2n}$.

\newcommand{\ch}{\operatorname{ch}}

\begin{lemma}
\label{lemma:trivial-bundles}
There exists some number $r_0$ with the property that for any complex vector bundle
$V\to T^{2n}$ with vanishing Chern classes there is an isomorphism of vector bundles
$$V\oplus\uC^{r_0}\simeq \uC^{\rk V+r_0}.$$
\end{lemma}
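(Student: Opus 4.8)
The plan is to show that the hypothesis of vanishing Chern classes forces $V$ to be \emph{stably trivial}, and then to invoke the standard fact that over a low-dimensional base two stably isomorphic bundles of sufficiently high rank are already isomorphic. Concretely, I would take $r_0=n$ and argue in two steps.

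First, stable triviality. The crucial input here is that $K^0(T^{2n})$ is torsion free; this follows by iterating the K\"unneth theorem in complex $K$-theory, since $K^*(S^1)$ is free abelian, so that $K^*(T^{2n})\cong K^*(S^1)^{\otimes 2n}$ is free (alternatively one notes that the Atiyah--Hirzebruch spectral sequence for $T^{2n}$ has a free $E_2$-page and collapses). Granting this, the Chern character $\ch\colon K^0(T^{2n})\to H^{\mathrm{even}}(T^{2n};\QQ)$ is injective, because it becomes an isomorphism after $\otimes\,\QQ$ while $K^0(T^{2n})$, being torsion free, injects into its rationalization. Now each homogeneous component $\ch_i$ with $i\ge 1$ is a polynomial with no constant term in $c_1,\dots,c_i$; hence the hypothesis $c_i(V)=0$ for all $i\ge 1$ gives $\ch(V)=\rk V$, i.e. $\ch([V]-\rk V\,[\uC])=0$. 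By injectivity, $[V]=\rk V\,[\uC]$ in $K^0(T^{2n})$, which means $V\oplus\uC^N\simeq\uC^{\rk V+N}$ for some large $N$.

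Second, destabilization. Over a CW complex $Y$ the stabilization map $[Y,BU(k)]\to[Y,BU(k+1)]$ is a bijection whenever $\dim Y\le 2k$: the homotopy fibre of $BU(k)\to BU(k+1)$ is $U(k+1)/U(k)=S^{2k+1}$, which is $2k$-connected, so the obstructions to lifting a map and to finding a homotopy between two lifts, living respectively in $H^{i+1}(Y;\pi_i(S^{2k+1}))$ and $H^{i}(Y;\pi_i(S^{2k+1}))$, all vanish in the relevant range. Since $\dim T^{2n}=2n$, this holds for every $k\ge n$, and consequently two complex vector bundles of equal rank $\ge n$ over $T^{2n}$ are isomorphic as soon as they are stably isomorphic. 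Applying this to $V\oplus\uC^{n}$ and $\uC^{\rk V+n}$, which both have rank $\rk V+n\ge n$ and are stably isomorphic by the first step, yields $V\oplus\uC^{n}\simeq\uC^{\rk V+n}$; thus $r_0=n$ works (and any larger value works as well).

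The main obstacle is the first step, and within it the one genuinely nontrivial ingredient is the torsion-freeness of $K^0(T^{2n})$: vanishing of the integral Chern classes does \emph{not} in general imply stable triviality, and it is precisely the absence of torsion in the $K$-theory of the torus that closes this gap. The destabilization in the second step is routine obstruction theory once the connectivity of $BU(k)\to BU(k+1)$ is recorded, and the precise value of the constant $r_0$ is immaterial to the statement.
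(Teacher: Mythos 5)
Your proof is correct and follows essentially the same route as the paper: stable triviality via the torsion-freeness of $K^*(T^{2n})$ (Künneth) together with injectivity of the Chern character, followed by destabilization in the stable range. The only cosmetic difference is that you prove the destabilization step by obstruction theory and extract the explicit value $r_0=n$, whereas the paper simply cites the corresponding theorem in Husemoller.
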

\begin{proof}
We first claim that if a complex vector bundle over $T^{2n}$  has vanishing Chern classes
then it represents the trivial element in $K^0(T^{2n})$. This follows from combining
two facts. First, $K^*(T^{2n})$ has no torsion: this is a consequence of the isomorphisms
$K^0(S^1)\simeq K^{-1}(S^1)\simeq\ZZ$ (see \cite[Example 2.8.1]{P}) and K\"unneth theorem
for $K$-theory (see \cite[Proposition 3.3.15]{P}). The second fact is that if $X$ is
a topological space which is homeomorphic to a finite CW-complex and $K^*(X)$
is torsion free, then the Chern character $\ch:K^*(X)\to H^*(X;\QQ)$ is injective
(see the Corollary in \cite[\S 2.4]{AH}).
Since a vector bundle with vanishing Chern classes has trivial Chern character, the claim follows.
To deduce the lemma from the claim, apply \cite[Ch. 9, Theorem 1.5]{H}.
\end{proof}

We are now ready to prove Theorem \ref{thm:vector-bundles}.
Assume that $p\equiv 1\mod n+1$.
Let $M$ be as in Lemma \ref{lemma:spanning-classes-G}, and
let $a_1,\dots,a_{n+1},\delta_1,\dots,\delta_n\in\ZZ$
be as in
Proposition \ref{prop:main-computation}. Consider the following vector bundle
over $T^{2n}$:
$$V_{n,p}=\bigoplus_{j=1}^{n+1}L_{n,p}^{a_jM}\oplus
\bigoplus_{j=1}^n G_j(\delta_j).$$
By Proposition \ref{prop:main-computation} all Chern classes of $V_{n,p}$ are trivial.

The action of $\Gamma_{n,p}$ on $L_{n,p}$ induces actions on its powers $L_{n,p}^{a_jM}$.
Combining these actions with those on the bundles $G_j(\delta_j)$, we obtain
an action of $\Gamma_{n,p}$ on $V_{n,p}$ by vector bundle automorphisms.

\begin{lemma}
If $p>M$ then the action of $\Gamma_{n,p}$ on $V_{n,p}$ is effective.
\end{lemma}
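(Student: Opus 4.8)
The plan is to show that the kernel $K$ of the action of $\Gamma_{n,p}$ on $V_{n,p}$ is trivial, by first confining $K$ to the center and then exhibiting a central element that acts nontrivially.

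First I would note that any $\gamma\in K$ acts trivially on the total space of $V_{n,p}$, hence on the base $T^{2n}$. Since the action of $\Gamma_{n,p}$ on $T^{2n}$ factors through $\eta\colon\Gamma_{n,p}\to\ZZ_p^{2n}$ and the resulting action of $\ZZ_p^{2n}$ is the free diagonal action (\ref{eq:accio}), this forces $\eta(\gamma)=0$; by the exact sequence (\ref{eq:exact}) we get $\gamma\in\ker\eta=\langle\ov\phi\rangle\cong\ZZ_p$. Thus $K\subseteq\langle\ov\phi\rangle$, a group of prime order $p$, so $K$ is either trivial or all of $\langle\ov\phi\rangle$. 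It therefore suffices to prove that $f=\ov\phi$ acts nontrivially on $V_{n,p}$.

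Next I would examine $f$ on the two kinds of summands. On each $G_j(\delta_j)=w^*F_j(\delta_j)$ the action of $\Gamma_{n,p}$ is defined through $\eta$, and since $\eta(f)=0$ the element $f$ acts as the identity. On $L_{n,p}$ the situation is different: $f=\ov\phi$ is induced by the diffeomorphism $\phi(t,\theta,\nu)=(t,\theta,\mu\nu)$ of $X$, which is precisely the principal action of $\mu=\exp(2\pi\imag/p)\in S^1$ on the principal bundle $Q_{n,p}$. On the associated weight-one line bundle $L_{n,p}=Q_{n,p}\times_{S^1}\CC$ this principal action descends to fibrewise multiplication by the primitive $p$-th root of unity $\mu$, so $f$ acts on the power $L_{n,p}^{a_jM}$ as multiplication by the scalar $\mu^{a_jM}$.

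The decisive point, and the only place where the hypothesis $p>M$ is used, is to check that one of these scalars is nontrivial. By Proposition \ref{prop:main-computation} the prime $p$ divides none of the integers $a_j$, and since $1\le M<p$ we also have $p\nmid M$; hence $p\nmid a_1M$ and $\mu^{a_1M}\neq1$. Therefore $f$ acts nontrivially on $L_{n,p}^{a_1M}$, and a fortiori on $V_{n,p}$, so $f\notin K$; combined with $K\subseteq\langle f\rangle$ of prime order this yields $K=\{1\}$, which is the claim. I expect the only delicate step to be the bookkeeping of the induced action of $f$ on $L_{n,p}$: one has to confirm that the fibrewise scalar is genuinely a primitive $p$-th root of unity, independent of any sign or weight convention, so that after raising to the $a_1M$ power the condition $p\nmid a_1M$ really guarantees a nonidentity transformation. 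The remaining reduction is elementary group theory.
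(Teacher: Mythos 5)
Your proof is correct and follows essentially the same route as the paper: reduce to the summands $L_{n,p}^{a_jM}$, observe that any element acting trivially must lie in $\Ker\eta\cong\ZZ_p$, note that such an element acts on $L_{n,p}^{a_jM}$ by a $p$-th root of unity raised to the power $a_jM$, and conclude from $p\nmid a_j$ (Proposition \ref{prop:main-computation}) and $p>M$ that this scalar is nontrivial. The only cosmetic difference is that you phrase the reduction via the prime-order subgroup $\langle\ov\phi\rangle$ and explicitly check its generator, which is equivalent to the paper's argument.
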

\begin{proof}
It suffices to prove that the action of $\Gamma_{n,p}$ on any
of the summands $L_{n,p}^{a_jM}$ is effective. If for some $j$
there were a nontrivial element $\gamma\in\Gamma_{n,p}$ acting
trivially on $L_{n,p}^{a_jM}$ then its action on $T^{2n}$ would
be trivial, i.e., $\gamma\in\Ker \eta$, see the exact sequence
(\ref{eq:exact}). Any nontrivial element $\theta\in
\Ker\eta\simeq\ZZ_p$ acts on the circle bundle $Q_{n,p}$ via
the action of a nontrivial $p$-root of unity $\mu_{\theta}\in
S^1$. Then the action of $\theta$ on $L_{n,p}^{a_jM}$ is via
multiplication by $\mu_{\theta}^{a_jM}$. Since neither $a_j$
nor $M$ are divisible by $p$, we have $\mu_{\theta}^{a_jM}\neq
1$. Hence $\Gamma_{n,p}$ acts effectively on each of the line
bundles $L_{n,p}^{a_jM}$.
\end{proof}

Since $\rk G_j(\delta_j)=\rk F_j(\delta_j)=jn!$ 
we have
$$\rk V_{p,n}=n+1+\frac{n(n+1)}{2}n!.$$
Since the right hand side is independent of $p$, we may use Lemma \ref{lemma:trivial-bundles}
to conclude the existence of some natural number $\tau=\tau(n)$, depending on $n$ but not on $p$,
with the property that
$$V_{p,n}\oplus\uC^{\tau-\rk V_{p,n}}\simeq \uC^{\tau}$$
as vector bundles. Taking the trivial lift of the action of $\Gamma_{n,p}$ to
$\uC^{\tau-\rk V_{p,n}}$, we obtain an action of $\Gamma_{n,p}$ on $\uC^{\tau}$
which by the previous lemma is effective as soon as $p>M$.
Since $M$ only depends on $n$, the proof of Theorem \ref{thm:vector-bundles} is complete.

\section{Proof of Theorem \ref{thm:main}}

Set the functions $\tau,M:\NN\to\NN$ to be those of Theorem
\ref{thm:vector-bundles}. By Theorem \ref{thm:vector-bundles},
for any prime $p$ satisfying $p>M(n)$ and $p\equiv 1\mod n+1$
there is an effective action of $\Gamma_{n,p}$ on
$W:=T^{2n}\times\CC^{\tau(n)}$ by vector bundle automorphisms.
By Lemma \ref{lemma:accio-fibrat-trivial} the induced action on
the determinant line bundle
$T^{2n}\times\Lambda^{\tau(n)}\CC^{\tau(n)}$ admits an
equivariant nowhere vanishing section
$\sigma:T^{2n}\to\Lambda^{\tau(n)}\CC^{\tau(n)}$. Using the
standard averaging trick, we may take a
$\Gamma_{n,p}$-invariant Hermitian structure $h_0$ on $W$.
Multiplying $h_0$ by $|\sigma|_{h_0}^{-1/\tau(n)}\in\RR_{>0}$
we get an invariant Hermitian metric $h$ with respect to which
$\sigma$ has constant norm equal to $1$.
Then the bundle $E$ of $h$-unitary frames
$((\theta,w_1),\dots,(\theta,w_{\tau(n)}))\in W$ such that
$$w_1\wedge\dots\wedge w_{\tau(n)}=\sigma(\theta)$$ is
isomorphic to $T^{2n}\times\SU(\tau(n))$ and it carries an
effective action of $\Gamma_{n,p}$ by principal bundle
automorphisms. Hence there exists a cocycle
$$c:\Gamma_{n,p}\times T^{2n}\to\SU(\tau(n))$$
such that for any $\gamma\in\Gamma_{n,p}$ and any $(\theta,h)\in T^{2n}\times\SU(\tau(n))$
we have
$$\gamma\cdot(\theta,h)=(\gamma\cdot\theta,c(\gamma,\theta)h),$$
where $\Gamma_{n,p}$ acts on $T^{2n}$ via the map
$\eta:\Gamma_{n,p}\to\ZZ_p^{2n}$ in (\ref{eq:exact}) and
formula (\ref{eq:accio}). The cocycle condition is
$c(\gamma'\gamma,\theta)=c(\gamma',\gamma\theta)c(\gamma,\theta)$
for any $\theta\in T^{2n}$ and any
$\gamma,\gamma'\in\Gamma_{n,p}$. The fact that the action of
$\Gamma_{n,p}$ is effective is equivalent to the condition that
for any nontrivial $\gamma\in\Ker\eta$ and any $\theta\in
T^{2n}$ the element $c(\gamma,\theta)$ is nontrivial.

\subsection{Proof of (1)}

Suppose that $X$ is a manifold with an almost effective action of $\SU(\tau(n))$.
Let $H$ be the kernel of this action. Define an action of $\Gamma_{n,p}$ on
$T^{2n}\times X$ by setting
$$\gamma\cdot(\theta,x)=(\gamma\cdot\theta,c(\gamma,\theta)\cdot x)$$
for every $\gamma\in\Gamma_{n,p}$ and any $x\in X$.
The fact that this defines an action of $\Gamma_{n,p}$
follows from the cocycle condition satisfied by $c$.

\begin{lemma}
If $p$ does not divide $|H|$ then the action of $\Gamma_{n,p}$
on $T^{2n}\times X$ is effective.
\end{lemma}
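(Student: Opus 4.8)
The plan is to show that any nontrivial $\gamma \in \Gamma_{n,p}$ acts nontrivially on $T^{2n}\times X$, distinguishing two cases according to whether $\gamma$ lies in $\Ker\eta$ or not. If $\eta(\gamma)\neq 0$, then $\gamma$ already acts nontrivially on the first factor $T^{2n}$ via the effective action of $\ZZ_p^{2n}$ from formula (\ref{eq:accio}), so the action on $T^{2n}\times X$ is automatically nontrivial. This disposes of all elements outside the center in one line.

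The substantive case is $\gamma\in\Ker\eta$ nontrivial, i.e.\ $\gamma$ is a nontrivial power of $\ov{\phi}$. Here $\gamma$ fixes every point of $T^{2n}$, so its action on $T^{2n}\times X$ is $(\theta,x)\mapsto(\theta,c(\gamma,\theta)\cdot x)$, and nontriviality amounts to finding some $\theta$ for which $c(\gamma,\theta)$ acts nontrivially on $X$. First I would record that the action of $\Gamma_{n,p}$ on the frame bundle $E\simeq T^{2n}\times\SU(\tau(n))$ is effective (this was established just above the lemma), which means that for our central $\gamma$ there is some $\theta_0$ with $c(\gamma,\theta_0)\neq 1$ in $\SU(\tau(n))$. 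The goal is then to promote ``$c(\gamma,\theta_0)$ is a nontrivial group element'' to ``$c(\gamma,\theta_0)$ acts nontrivially on $X$'', which is exactly where the hypothesis $p\nmid|H|$ enters.

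The key structural observation I would exploit is that $\gamma$ is central of order $p$ in $\Gamma_{n,p}$, so $c(\gamma,-)$ should take values in a controlled subgroup of $\SU(\tau(n))$. Concretely, since $\ov{\phi}$ acts on the original vector bundle $V_{n,p}$ through the central character $\mu=\exp(2\pi\imag/p)$ on the line-bundle summands $L_{n,p}^{a_jM}$ (and trivially, after passing to $\Gamma_{n,p}$, on the $G_j(\delta_j)$ summands, since those factor through $\eta$), the element $c(\gamma,\theta)\in\SU(\tau(n))$ is a \emph{constant} diagonal scalar-type matrix, independent of $\theta$, whose entries are $p$-th roots of unity. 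In particular $c(\gamma,\theta_0)$ has order dividing $p$, so the cyclic subgroup $\langle c(\gamma,\theta_0)\rangle\subseteq\SU(\tau(n))$ has order exactly $p$ (it is nontrivial by effectiveness on $E$).

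Now I would finish by a counting argument against $H$. The action on $X$ factors through $\SU(\tau(n))/H$, so $c(\gamma,\theta_0)$ acts trivially on $X$ precisely when $c(\gamma,\theta_0)\in H$. But the subgroup $\langle c(\gamma,\theta_0)\rangle$ has order $p$, and if it met $H$ nontrivially then, being of prime order $p$, it would be entirely contained in $H$, forcing $p\mid|H|$, contrary to hypothesis. Hence $c(\gamma,\theta_0)\notin H$, so it acts nontrivially on $X$, and therefore $\gamma$ acts nontrivially on $T^{2n}\times X$. The main obstacle I anticipate is verifying cleanly that the central element maps under $c$ into a small finite cyclic group of order $p$ (rather than some $\theta$-dependent family in $\SU(\tau(n))$); this requires tracking how $\ov{\phi}$ acts summand-by-summand through the construction of $V_{n,p}$, but the remainder of the argument is then a short order-$p$ intersection argument exploiting $p\nmid|H|$.
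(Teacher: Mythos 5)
Your proof is correct and follows the same overall skeleton as the paper's: split according to whether $\eta(\gamma)$ vanishes, reduce to a central $\gamma$ of order $p$, show $c(\gamma,\theta)$ has order exactly $p$, and conclude $c(\gamma,\theta)\notin H$ because $p\nmid|H|$ forces $H$ to contain no element of order $p$. The one place you diverge is in establishing that $c(\gamma,\theta)^p=1$: the paper gets this in one line from the cocycle identity, since $\gamma$ fixes $\theta$ and hence $c(\gamma^k,\theta)=c(\gamma,\theta)^k$, so $c(\gamma,\theta)^p=c(\gamma^p,\theta)=c(1,\theta)=1$. You instead trace $\ov{\phi}$ summand-by-summand through the construction of $V_{n,p}$. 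That works, but your intermediate claim that $c(\gamma,\theta)$ is a \emph{constant diagonal} matrix independent of $\theta$ is slightly too strong: the trivialization $E\simeq T^{2n}\times\SU(\tau(n))$ need not respect the direct sum decomposition, so in the chosen frame $c(\gamma,\theta)$ is only \emph{conjugate} (by a possibly $\theta$-dependent element) to the fixed diagonal matrix with $p$-th-root-of-unity entries. This does not damage your argument, since conjugation preserves order and order is all you use, but the cocycle identity gives the same conclusion with no bookkeeping and no dependence on the internal structure of $V_{n,p}$.
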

\begin{proof}
If an element $\gamma\in\Gamma$ acts trivially on $T^{2n}\times X$ then we must have
$\eta(\gamma)=0$, so $c(\gamma,\theta)$ is a nontrivial element of order $p$ for every $\theta$
(indeed, if $\gamma\cdot\theta=\theta$ then the cocycle condition reads $c(\gamma^k,\theta)=c(\gamma,\theta)^k$ for every $k$).
If $p$ does not divide $|H|$ then $c(\theta,\gamma)$ does not belong to $H$, which implies that
$\gamma$ acts nontrivially on $T^{2n}\times X$.
\end{proof}

Setting $\Gamma:=\Gamma_{n,p}$, Lemma
\ref{lemma:abelian-subgroups-Gamma-n-p} concludes the proof of
statement (1) of the theorem.

\subsection{Proof of (2)}
Now assume that $r>1$. Then $E^r$ can be identified with a
trivial principal $\SU(\tau(n))^r$ bundle over $T^{2nr}$, and
it carries an effective action of $(\Gamma_{n,p})^r$.
We next prove that $(\Gamma_{n,p})^r$ contains a subgroup
$\Gamma$ with $p^{2n+r}$ elements which does not contain any
abelian subgroup with more than $p^{2+r+4n/r}$ elements. The
following result is due to Olshanskii (see Lemma 2 in
\cite{O}):

\begin{lemma}
Suppose that $k$ satisfies the condition
$4n<r(k-1)$. Then there exists a set of symplectic forms
$\{\omega_1,\dots,\omega_r\}$ in $V:=\ZZ_p^{2n}$ with the property that
no $k$-dimensional subspace of $V$ is isotropic for all the forms
$\omega_1,\dots,\omega_r$ simultaneously.
\end{lemma}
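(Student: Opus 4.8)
The plan is to prove the lemma by a first-moment (counting) argument: I would choose the forms $\omega_1,\dots,\omega_r$ independently and uniformly at random among all symplectic (i.e.\ nondegenerate alternating) forms on $V=\ZZ_p^{2n}$, and show that the expected number of $k$-dimensional subspaces that are simultaneously isotropic for all of them is strictly less than $1$. Sampling from symplectic forms from the outset, rather than from arbitrary alternating forms, has the decisive advantage that nondegeneracy is automatic, so any tuple produced by the argument consists of genuine symplectic forms; this sidesteps the awkward task of separately arranging nondegeneracy, which is delicate when $r$ is large compared with $p$.

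First I would fix notation: let $N_k=\binom{2n}{k}_p$ denote the number of $k$-dimensional subspaces of $V$, and let $I_k$ denote the number of $k$-dimensional totally isotropic subspaces of a fixed symplectic space of dimension $2n$. The key point is that for a fixed $k$-dimensional $W$ the probability that a uniformly random symplectic form vanishes on $W$ equals $I_k/N_k$. I would establish this by a double count of the set of pairs $\{(\omega,W)\mid \omega\text{ symplectic},\ W\subset V\text{ a }k\text{-dimensional }\omega\text{-isotropic subspace}\}$. Counting by $\omega$ gives $S\cdot I_k$, where $S$ is the total number of symplectic forms and $I_k$ is independent of $\omega$ because $\GL(V)$ acts transitively on symplectic forms (and carries isotropic subspaces to isotropic subspaces). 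Counting by $W$ gives $N_k\cdot S_W$, where $S_W$ is the number of symplectic forms vanishing on $W$ and is independent of $W$ because $\GL(V)$ acts transitively on $k$-dimensional subspaces. Hence $S_W/S=I_k/N_k$, and by independence the probability that all of $\omega_1,\dots,\omega_r$ vanish on a given $W$ is $(I_k/N_k)^r$.

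Next I would estimate $I_k/N_k$. Using the product formulas $N_k=\prod_{i=0}^{k-1}\frac{p^{2n-i}-1}{p^{i+1}-1}$ and $I_k=\prod_{i=0}^{k-1}\frac{p^{2n-2i}-1}{p^{i+1}-1}$, the denominators cancel and the ratio telescopes to $I_k/N_k=\prod_{i=0}^{k-1}\frac{p^{2n-2i}-1}{p^{2n-i}-1}$; the elementary inequality $p^{i}(p^{2n-2i}-1)\le p^{2n-i}-1$ (equivalent to $p^i\ge 1$) shows each factor is at most $p^{-i}$, whence $I_k/N_k\le p^{-\binom{k}{2}}$. Summing the probability $(I_k/N_k)^r$ over all $N_k$ subspaces $W$, the expected number of simultaneously isotropic $k$-planes is at most $N_k\,p^{-r\binom{k}{2}}$. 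Combining this with the standard bound $N_k\le C\,p^{k(2n-k)}$, where $C=\prod_{j\ge1}(1-p^{-j})^{-1}<4$, bounds the expected count by $C\,p^{k(2n-k)-r\binom{k}{2}}$. The hypothesis $4n<r(k-1)$ forces $k\ge 2$ and gives
$$r\binom{k}{2}-k(2n-k)=\tfrac{k}{2}\bigl(r(k-1)-4n+2k\bigr)>k^2\ge 4,$$
so the expected count is less than $C\,p^{-4}<4\cdot 2^{-4}<1$. Therefore some tuple $(\omega_1,\dots,\omega_r)$ of symplectic forms has no common isotropic $k$-subspace, which is the assertion of the lemma.

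The step deserving the most care, and the main obstacle, is the exact evaluation of the probability as $I_k/N_k$. It is tempting to sample from all alternating forms, where the condition $\omega|_W=0$ is a transparent linear condition of codimension $\binom{k}{2}$; but then one is forced to argue separately that a good tuple can be taken nondegenerate, and the naive union bound over ``some coordinate is degenerate'' fails once $r$ is comparable to $p$. Passing to symplectic forms removes this difficulty entirely, at the modest cost of the transitivity/double-counting computation above together with the clean inequality $I_k/N_k\le p^{-\binom{k}{2}}$. Once these two ingredients are in place, the remaining estimates are routine and the conclusion follows for every prime $p$.
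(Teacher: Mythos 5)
Your argument is correct, but note that the paper does not actually prove this statement: it is quoted verbatim from Olshanskii \cite[Lemma 2]{O} and used as a black box, so there is no in-paper proof to compare against. Your first-moment computation is a valid, self-contained substitute. I checked the key steps: the double count giving $S_W/S=I_k/N_k$ is legitimate (transitivity of $\GL(V)$ on nondegenerate alternating forms and on $k$-planes gives the two required uniformity statements); the product formulas for $N_k$ and $I_k$ are the standard ones, and the telescoping bound $I_k/N_k\le p^{-\binom{k}{2}}$ follows from $p^i(p^{2n-2i}-1)\le p^{2n-i}-1$ as you say; the hypothesis $4n<r(k-1)$ forces $k\ge 2$ and yields $r\binom{k}{2}-k(2n-k)=\tfrac{k}{2}(r(k-1)-4n+2k)>k^2\ge 4$, so the expected count is below $4\cdot 2^{-4}<1$. (The degenerate cases $k>n$ and $k>2n$ are harmless since then $I_k=0$, resp.\ $N_k=0$.) Your decision to sample directly from symplectic forms rather than from all alternating forms is a sensible design choice, and your stated reason is accurate: conditioning on nondegeneracy afterwards via a union bound degrades as $r$ grows relative to $p$, whereas the transitivity argument costs nothing. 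Olshanskii's original proof is likewise a counting argument, so in spirit your route is the classical one; what your write-up adds over the paper is an explicit, prime-independent constant and a proof the reader can verify without consulting the Russian original.
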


Let $\{\omega_1,\dots,\omega_r\}$ be as in the lemma, with $k=2+4n/r$.
Let $A_1,\dots,A_r$ be elements of $\GL(2n,\ZZ_p)$ such that
$$\omega_j(u,v)=\omega(A_ju,A_jv)$$
for each $j$ and each $u,v\in\ZZ_p^{2n}$, where $\omega$ is the standard symplectic
form (\ref{eq:omega}). Define
$$\Gamma=\{(\gamma_1,\dots,\gamma_r)\in (\Gamma_{n,p})^r\mid
A_1^{-1}\eta(\gamma_1)=A_2^{-1}\eta(\gamma_2)=\dots=A_r^{-1}\eta(\gamma_r)\},$$
where $\eta:\Gamma_{n,p}\to\ZZ_p^{2n}$ is the morphism in the exact sequence (\ref{eq:exact}).
Consider the projection
$$\eta':\Gamma\to\ZZ_p^{2n},\qquad \eta'(\gamma_1,\dots,\gamma_r)=A_1^{-1}\eta(\gamma_1).$$
Then there is an exact sequence
$$0\to\ZZ_p^r\to\Gamma\stackrel{\eta'}{\longrightarrow}\ZZ_p^{2n}\to 0,$$
and two elements $\gamma=(\gamma_1,\dots,\gamma_r)$ and $\gamma'=(\gamma'_1,\dots,\gamma'_r)$
of $\Gamma$ commute if, for each $j$,
\begin{multline*}
$$0=\omega(\eta(\gamma_j),\eta(\gamma_j'))=
\omega(A_jA_j^{-1}\eta(\gamma_j),A_jA_j^{-1}\eta(\gamma_j')) = \\ =
\omega(A_jA_1^{-1}\eta(\gamma_1),A_jA_1^{-1}\eta(\gamma_1'))=
\omega(A_j\eta'(\gamma),A_j\eta'(\gamma'))=\omega_j(\eta'(\gamma),\eta'(\gamma')).
\end{multline*}
So if $A\subset\Gamma$ is an abelian subgroup then
$\eta'(A)$ is a subspace of $\ZZ_p^{2n}$ which is isotropic with respect to all forms $\omega_1,\dots,\omega_r$ simultaneously. Hence, $\eta'(A)$ has dimension at most $k$
and consequently $A$ contains at most $p^{r+k}$ elements.

Now the proof of (2) is concluded arguing exactly as in (1),
replacing $\SU(\tau(n))$ by $\SU(\tau(n))^r$.

\end{document}